\newtheorem{defn}{Definition}[section]
\newtheorem{thm}{Theorem}[section]
\newtheorem{lem}{Lemma}[section]
\newtheorem{assump}{Assumption}[section]
\newcommand{\R}{\mathbb{R}}
\newcommand{\bX}{\mathds{X}}
\newcommand{\bY}{\mathds{Y}}
\newcommand{\eps}{\epsilon}
\newcommand{\ud}{\,\mathrm{d}}
\newcommand{\ovd}{\frac{1}{\delta}}
\newcommand{\EE}{\mathbb{E}}
\newcommand{\PP}{\mathbb{P}}
\newcommand{\FF}{\mathbb{F}}
\newcommand{\RR}{\mathbb{R}}
\newcommand{\mcF}{\mathcal{F}}
\newcommand{\mcP}{\mathcal{P}}
\newcommand{\mcL}{\mathcal{L}}
\newcommand{\mc}[1]{\mathcal{#1}}
\newcommand{\half}{\frac{1}{2}}
\newcommand{\transpose}{^{\operatorname{T}}}
\DeclareMathOperator*{\argmin}{arg\,min}
\numberwithin{equation}{section}
\icmltitlerunning{Sig-DFP for MFG with Common Noise}
\begin{document}

\twocolumn[
\icmltitle{Signatured Deep Fictitious Play for Mean Field Games with Common Noise}



\icmlsetsymbol{equal}{*}

\begin{icmlauthorlist}
\icmlauthor{Ming Min}{equal,pstat}
\icmlauthor{Ruimeng Hu}{equal,pstat,math}
\end{icmlauthorlist}

\icmlaffiliation{pstat}{Department of Statistics and Applied Probability, University of California, Santa Barbara, CA 93106-3110, USA}
\icmlaffiliation{math}{Department of Mathematics, University of California, Santa Barbara, CA 93106-3080, USA}

\icmlcorrespondingauthor{Ruimeng Hu}{rhu@ucsb.edu}

\icmlkeywords{Mean field games with common noise, rough path theory, signature, deep fictitious play}

\vskip 0.3in
]



\printAffiliationsAndNotice{\icmlEqualContribution} 

\begin{abstract}

Existing deep learning methods for solving mean-field games (MFGs) with common noise fix the sampling common noise paths and then solve the corresponding MFGs. This leads to a nested-loop structure with millions of simulations of common noise paths in order to produce accurate solutions, which results in prohibitive computational cost and limits the applications to a large extent. In this paper, based on the rough path theory, we propose a novel single-loop algorithm, named signatured deep fictitious play, by which we can work with the unfixed common noise setup to avoid the nested-loop structure and reduce the computational complexity significantly. The proposed algorithm can accurately capture the effect of common uncertainty changes on mean-field equilibria without further training of neural networks, as previously needed in the existing machine learning algorithms. The efficiency is supported by three applications, including linear-quadratic MFGs, mean-field portfolio game, and mean-field game of optimal consumption and investment. Overall, we provide a new point of view from the rough path theory to solve MFGs with common noise with significantly improved efficiency and an extensive range of applications. In addition, we report the first deep learning work to deal with extended MFGs (a mean-field interaction via both the states and controls) with common noise. 
\end{abstract}

\section{Introduction}

Stochastic differential games study the strategic interaction of rational decision-makers in an uncertain dynamical system, and have been widely applied to many areas, including social science, system science, and computer science. For realistic models, the problem usually lacks tractability and needs numerical methods. With a large number of players resulting in high-dimensional problems, conventional algorithms soon lose efficiency and one may resort to recently developed machine learning tools \cite{Hu2:19,HaHu:19,han2020convergence}. On the other hand, one could utilize its limiting mean-field version, mean-field games (MFGs), to approximate the $n$-player game for large $n$ ({\it e.g.}, \citet{han2021GMMD}). Introduced independently in \citet{huang2006large,lasry2007mean}, MFGs study the decision making problem of a continuum of agents, aiming to provide asymptotic analysis of the finite player model in which players interact through their empirical distribution. In an MFG, each agent is infinitesimal, whose decision can not affect the population law. Therefore, the problem can be solved by focusing on the optimal decision of a representative agent in response to the average behavior of the entire population and a fixed-point problem ({\it cf.} equation \eqref{def:fixedpoint}). The MFG model has inspired tremendous applications, not only in finance and economics, such as system risk \cite{carmona2015mean}, high-frequency trading \cite{lachapelle2016efficiency}  and crowd trading \cite{cardaliaguet2017mean}, but also to population dynamics \cite{achdou2017mean,djehiche2017mean,achdou2019mean} and sanitary vaccination \cite{hubert2018nash,elie2020contact}, to list a few. For a systematical introduction of MFGs, see \citet{caines2015mean, carmona2018probabilistic,carmona2018probabilistic2}.

In MFGs, the random shocks to the dynamical system can be from two sources: idiosyncratic to the individual players and common to all players, \emph{i.e.}, decision-makers face correlated randomness.  While MFGs were initially introduced with only idiosyncratic noise as seen in most of the literature, games with common noise, referred to as \emph{MFGs with common noise}, have attracted significant attention recently \cite{lacker2015translation,carmona2016mean,ahuja2016wellposedness,graber2016linear}. The inclusion of common noise is natural in many contexts, such as multi-agent trading in a common stock market, or systemic risk induced through inter-bank lending/borrowing. In reality, players make decisions in a common environment ({\it e.g.}, trade in the same stock market). Therefore, their states are subject to correlated random shocks, which can be modeled by individual noises and a common noise. In this modeling, observing the state dynamics will be sufficient, and one does not need to observe the noises. These applications make it crucial to develop efficient and accurate algorithms for computing MFGs with common noise.

Theoretically, MFGs with common noise can be formulated as an infinite-dimensional master equation, which is the type of second-order nonlinear Hamilton-Jacobi-Bellman equation involving derivatives with respect to a probability measure. Therefore, direct simulation is infeasible due to the difficulty of discretizing the probability space. An alternative way of solving MFGs with common noise is to formulate it into a stochastic Fokker-Planck/Hamilton-Jacobi-Bellman system, which has a complicated form with common noise, forward-backward coupling, and second-order differential operators. The third kind of approaches turns it into forward backward stochastic differential equations (FBSDE) of McKean-Vlasov type ({\it cf.} \citet[Chapter 2]{carmona2018probabilistic2}), which in general requires convexity of the Hamiltonian. For all three approaches, the common assumption is the monotonicity condition that ensures uniqueness. Regarding simulation, existing deep learning methods fix the sampling common noise paths and then solve the corresponding MFGs, which leads to a nested-loop structure with millions of simulations of common noise paths to produce accurate 
predictions for unseen common shock realizations. Then the computational cost becomes prohibitive and limits the applications to a large extent.


In this paper, we solve MFGs with common noise by directly parameterizing the optimal control using deep neural networks in spirit of \cite{han2016deep}, and conducting a global optimization. We integrate the signature from rough path theory, and fictitious play from game theory for efficiency and accuracy, and term the algorithm \emph{Signatured Deep Fictitious Play} (Sig-DFP). The proposed algorithm avoids solving the three aforementioned complicated equations (master equation, Stochastic FP/HJB, FBSDE) and does not have uniqueness issues.

{\bf Contribution.} We design a novel efficient single-loop deep learning algorithm, Sig-DFP, for solving MFGs with common noise by integrating fictitious play \cite{brown1949some} and Signature \cite{LyonsTerryJ2007DEDb} from rough path theory. {To our best knowledge, this is the first work focusing on the common noise setting, which can address heterogeneous MFGs and heterogeneous extended MFGs, both with common noise.} 
  
We prove that the Sig-DFP algorithm can reach mean-field equilibria as both the depth $M$ of the truncated signature and the stage $n$ of the fictitious play approaching infinity, subject to the universal approximation of neural networks.
   We demonstrate its convergence superiority on three benchmark examples, including homogeneous MFGs, heterogeneous MFGs, and heterogeneous extended MFGs, all with common noise, and with assumptions even beyond the technical requirements in the theorems. Moreover, the algorithm has the following advantages: 

1. Temporal and spacial complexity are $\mc{O}(NLp+Np^2)$ and $\mc{O}(NLp)$, compared to $\mc{O}(N^2 L)$ (for both time and space) in existing machine learning algorithms, with $N$ as the sample size, $L$ as the time discretization size, $p = \mc{O}(n_0^M)$, $n_0$ as the dimension of common noise.

2. Easy to apply the fictitious play strategy: only need to average over linear functionals with $\mc{O}(1)$ complexity.

{\bf Related Literature.} After MFGs firstly introduced by \citet{huang2006large} and \citet{lasry2007mean} under the setting of a continuum of homogeneous players but without common noise, it has been extended to many applicable settings, {\it e.g.}, heterogeneous players games \cite{lacker2018mean,lacker2020many} and major-minor players games \cite{huang2010large,nourian2013,carmona2016probabilistic}. A recent line of work studies MFGs with common noise \cite{carmona2015mean,bensoussan2015master,ahuja2016wellposedness,cardaliaguet2019master}. Despite its theoretical progress and importance for applications, efficient numerical algorithms focusing on common noise settings are still missing. Our work will fill this gap by integrating machine learning tools with learning procedures from game theory and signature from rough path theory. 



Fictitious play was firstly proposed in \citet{brown1949some,brown1951iterative} for normal-form games, as a learning procedure for finding Nash equilibria. It has been widely used in the Economic literature, and adapted to MFGs \cite{cardaliaguet2015learning,briani2018stable} and finite-player stochastic differential games \cite{Hu2:19,HaHu:19,han2020convergence,xuan2020optimal}.

Using machine learning to solve MFGs has also been considered, for both model-based setting \cite{carmona2019convergence,ruthotto2020machine,lin2020apac} and model-free reinforcement learning setting \cite{guo2019learning,tiwari2019reinforcement,angiuli2020unified,EliePerolatLauriereGeistPietquin-2019_AFP-MFG}, most of which did not consider common noise. Existing machine learning methods for MFGs with common noise were studied in \citet{perrin2020fictitious}, which have a nested-loop structure and require millions of simulations of common noise paths to produce accurate predictions for unseen common shock realizations.


The signature in rough path theory has been recently applied to machine learning as a feature map for sequential data. For example, \citet{JMLR:v20:16-314, NEURIPS2019_deepsig,  toth2019bayesian,  min2020convolutional} have used signatures in natural language processing, time series, and handwriting recognition, and \citet{chevyrev2018signature, ni2020conditional} studied the relation between signatures and distributions of sequential data. We refer to \citet{lyons2002system, LyonsTerryJ2007DEDb} for a more detailed introduction of the signature and rough path theory.



\section{Mean Field Games with Common Noise}


We first introduce the following notations to precisely define MFGs with common noise. For a fixed time horizon $T$, let $(W_t)_{0 \leq t \leq T}$ and $(B_t)_{0 \leq t \leq T}$ be independent $n$- and $n_0$-dimensional Brownian motions defined on a complete filtered probability space $(\Omega, \mcF, \FF = \{\mcF_t\}_{0\leq t\leq T}, \PP)$. We shall refer $W$ as the \emph{idiosyncratic noise} and $B$ as the \emph{common noise} of the system. Let $\mcF^B_t$ be the filtration generated by $(B_t)_{0 \leq t \leq T}$, and $\mc{P}^p(\RR^d)$ be the collection of probability measures on $\RR^d$ with finite $p^{th}$ moment, {\it i.e.}, $\mu \in \mc{P}^p(\RR^d)$ if
\begin{equation}
    \left(\int_{\RR^d} \|x\|^p \ud \mu(x)\right)^{1/p} < \infty.
\end{equation}
We denote by $\mc{M}([0,T]; \mcP^2(\RR^d))$ the space of continuous $\mcF^B$-adapted stochastic flow of probability measures with the finite second moment, and by $\mc{H}^2([0, T]; \RR^m)$ 
the set of all $\mcF$-progressively measurable $\RR^m$-valued square-integrable processes.


Next, we introduce the concept of MFGs with common noise. Given an initial distribution $\mu_0 \in \mcP^2(\RR^d)$, and a stochastic flow of probability measures $\mu = (\mu_t)_{0 \leq t \leq T} \in \mc{M}([0,T]; \mcP^2(\RR^d))$, we consider the stochastic control 
\begin{align}
        &\inf_{(\alpha_t)_{0 \leq t \leq T}} \EE[\int_0^T f(t, X_t, \mu_t, \alpha_t) \ud t + g(X_T, \mu_T)], \label{def:J}\\
        &\text{where }
        \ud X_t = b(t, X_t, \mu_t, \alpha_t)\ud t + \sigma(t, X_t, \mu_t, \alpha_t) \ud W_t \nonumber \\ 
        & \hspace{5.5em} + \sigma^0(t, X_t, \mu_t, \alpha_t) \ud B_t, \label{def:Xt}
\end{align}
with $X_0 \sim \mu_0$. Here the representative agent controls his dynamics $X_t$ through a $\RR^m$-dimensional control process $\alpha_t$, and the drift coefficient $b$, diffusion coefficients $\sigma$ and $\sigma^0$, running cost $f$ and terminal cost $g$ are all measurable functions, with $(b, \sigma, \sigma^0, f): [0,T] \times \RR^d\times \mcP^2(\RR^d) \times  \RR^m \to \RR^d \times \RR^{d \times n} \times \RR^{d \times n_0}\times \RR$, and $g: \RR^d \times \mcP^2(\RR^d) \to \RR$.

Note that since $\mu$ is stochastic, \eqref{def:J}--\eqref{def:Xt} is a control problem with random coefficients.  


\begin{defn}[Mean-field equilibrium]\label{defn:MFG}
The control-distribution flow pair $\alpha^\ast = (\alpha^\ast_t)_{0 \leq t \leq T} \in \mc{H}^2([0, T]; \RR^m)$, $\mu^\ast \in \mc{M}([0,T]; \mcP^2(\RR^d))$ is a mean-field equilibrium to the MFG with common noise, if $\alpha^\ast$ solves \eqref{def:J} given the stochastic measure flow $\mu^\ast$, and the conditional marginal distribution of the optimal path $X_t^{\alpha^\ast}$ given the common noise $B$ coincides with the measure flow $\mu^\ast$:
\begin{equation}\label{def:MFG}
    \mu_t^\ast = \mcL(X_t^{\alpha^\ast} \vert \mcF_t^B),
\end{equation}
where $\mcL(\cdot \vert \mc{F})$ is the conditional law given a filtration $\mc{F}$.
\end{defn}

We remark that, with a continuum of agents, the measure $\mu^\ast$ is not affected by a single agent's choice, and the MFG is a standard control problem plus an additional fixed-point problem. More precisely, denote by $\hat\alpha^\mu$ the optimal control of \eqref{def:J}--\eqref{def:Xt} given the stochastic measure flow $\mu \in \mc{M}([0,T]; \mcP^2(\RR^d))$, then $\mu^\ast$ is a fixed point of  
\begin{equation}\label{def:fixedpoint}
    \mu_t = \mcL(X_t^{\hat\alpha^\mu}\vert \mcF_t^B).
\end{equation}

{\it MFGs without common noise:} Note that with $\sigma^0 \equiv 0$, \eqref{def:J}--\eqref{def:Xt} is a MFG without common noise, and the flow of measures $\mu_t$ becomes deterministic. 

{\it Extended MFGs:} In extended mean field games, the interactions between the representative agent and the population happen via both the states and controls, thus the functions $(b, \sigma, \sigma^0, f, g)$ can also depend on $\mc{L}(\alpha_t \vert \mcF_t^B)$. 


\section{Fictitious Play and Signatures}

The Signatured Deep Fictitious Play (Sig-DFP) algorithm is built on fictitious play, and propagates conditional distributions $\mu = \{\mu_t\}_{0\le t\le T} \in \mc{M}([0,T]; \mc{P}^2(\RR^d))$ by signatures. This section briefly introduces these two ingredients. 



In the learning procedure of {\it fictitious play}, players myopically choose their best responses against the empirical distribution of others' actions at every subsequent stage after arbitrary initial moves. When \citet{cardaliaguet2015learning,cardaliaguet2017mean} extended it to mean-field settings, the empirical distribution of actions is naturally replaced by the average of distribution flows. More precisely, let $\Bar{\mu}^{(0)}\in  \mc{M}([0,T]; \mc{P}^2(\RR^d))$ be the initial guess of $\mu^\ast$ in \eqref{def:MFG}, and consider the following iterative algorithm: (1) take $\Bar{\mu}^{(n-1)}\in\mc{P}^2(\R^d)$ as the given flow of measures in \eqref{def:J}--\eqref{def:Xt} for the $n$-th iteration, and solve the optimal control in \eqref{def:J} denoted by $\alpha^{(n)}$; (2) solve the controlled stochastic differential equation (SDE) \eqref{def:Xt} for $X^{\alpha^{(n)}}$ and then infer the conditional distribution flow $\mu^{(n)} = \mc{L}(X^{\alpha^{(n)}} \vert \mc{F}_t^B)$; (3) average distributions $\Bar{\mu}^{(n)} = \frac{n-1}{n}\Bar{\mu}^{(n-1)} + \frac{1}{n}\mu^{(n)}$ and pass $\Bar{\mu}^{(n)}$ to the next iteration. If $\mu^{(n)}$ converges and the strategy corresponding to the limiting measure flow is admissible, then by construction, it is a fixed-point of \eqref{def:fixedpoint} and thus a mean-field equilibrium. 

{\it Signatures of Paths.} Let $T((\R^d)):=\bigoplus_{k=0}^\infty (\R^d)^{\bigotimes k}$ be the tensor algebra, and denote by $\mathcal{V}^p([0,T], \R^d)$ the space of continuous mappings from $[0,T]$ to $\R^d$ with finite $p$-variation. For a path $x:[0,T]\to \RR^d$, define the $p$-variation 
\begin{equation}
    \|x\|_{p} := \left( \sup_{D\subset[0,T]} \sum_{i=0}^{r-1} \|x_{t_{i+1}}-x_{t_i}\|^p \right)^{1/p},
\end{equation} 
where $D \subset [0,T]$ denotes a partition $0 \leq t_0 < t_1 < \ldots < t_r \leq T$. We equip the space $\mathcal{V}^p([0,T], \R^d)$ with the norm $  \|\cdot\|_{\mathcal{V}^p}:=\|\cdot\|_{\infty}+\|\cdot\|_{p}$. 

\begin{defn}[Signature]
Let $X\in \mathcal{V}^p([0,T], \R^d)$ such that the following integral makes sense. The signature of $X$, denoted by $S(X)$, is an element of $T((\R^d))$ defined by $S(X) = (1, X^1, \cdots, X^k \cdots)$ with
\begin{equation}\label{def:signature}
    X^k = \int_{0<t_1<t_2<\cdots<t_k<T} \ud X_{t_1}\otimes\cdots\otimes \ud X_{t_k}.
\end{equation}
We denote by $S^M(X)$ the truncated signature of $X$ of depth $M$, {\it i.e.}, $S^M(X) = (1, X^1, \cdots, X^M)$ and has the dimension $\frac{d^{M+1}-1}{d-1}$.
\end{defn}
Note that when $X$ is a semi-martingale (the case of our problems), equation \eqref{def:signature} is understood in the Stratonovich sense. The following properties of the signature make it an ideal choice for our problem, with more details in  Appendix~\ref{app:signature}. 


1. Signatures characterize paths uniquely up to the tree-like equivalence, and the equivalence is removed if at least one dimension of the path is strictly increasing \cite{boedihardjo2014signature}. Therefore, we shall augment the original path with the time dimension in the algorithm, {\it i.e.}, working with $\hat{X}_t = (t, X_t)$ since $S(\hat{X})$ characterizes paths $\hat{X}$ uniquely.
    
2. Terms in the signature present a factorial decay property, which provides the accuracy of using a few terms in the signature (small $M$) to approximate a path. 
 
3. As a feature map of sequential data, the signature has a universality detailed in the following theorem.


\begin{thm}[Universality, \citet{NEURIPS2019_deepsig}]
    Let $p\ge 1$ and $f: \mathcal{V}^p([0,T], \R^d)\to \R$ be a continuous function in paths. For any compact set $K\subset \mathcal{V}^p([0,T], \R^d)$, if $S(x)$ is a geometric rough path for any $x\in K$, then for any $\eps >0$ there exist $M>0$ and a linear functional $l \in T((\RR^d))^\ast$ such that
    \begin{equation}
        \sup_{x\in K} |f(x) - \langle l, S(x) \rangle| <\epsilon.
    \end{equation}
    \label{thm:sig_universality}
\end{thm}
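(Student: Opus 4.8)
The plan is to prove this as a direct application of the Stone--Weierstrass theorem to the algebra of linear functionals of the signature. Concretely, I would introduce the set
\[
\mc{A} := \{\, x \mapsto \langle l, S(x)\rangle \ : \ l \in T((\R^d))^\ast \text{ with finitely many nonzero components}\,\} \subset C(K,\R),
\]
where each such $l$ selects finitely many signature coordinates and the largest level it touches is exactly the truncation depth $M$ that appears in the conclusion. The goal is to show that $\mc{A}$ is dense in $C(K,\R)$ under the uniform norm; once this is established the theorem follows by choosing $l$ so that $\langle l, S(\cdot)\rangle$ approximates $f$ to within $\eps$ on the compact set $K$. To invoke Stone--Weierstrass I must verify that (i) $\mc{A}\subset C(K,\R)$, (ii) $\mc{A}$ contains the constants, (iii) $\mc{A}$ separates the points of $K$, and (iv) $\mc{A}$ is closed under multiplication, i.e. it is a subalgebra.

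Items (i)--(iii) are comparatively routine. For (i), the signature map $S$ is continuous from $K$ (with the $\|\cdot\|_{\V^p}$ topology) into the tensor algebra by the continuity/extension theorem of rough path theory, and each $l$ with finitely many components is a bounded linear functional, so $x\mapsto\langle l, S(x)\rangle$ is continuous; compactness of $K$ then guarantees boundedness. For (ii), the zeroth-level term of any signature equals $1$, so taking $l$ supported on the empty word reproduces every constant. For (iii), I would invoke the uniqueness theorem for signatures: two paths with identical signatures are tree-like equivalent, and this ambiguity is removed once a strictly increasing coordinate is present (the time augmentation $\hat{X}_t=(t,X_t)$ discussed above), so $S$ is injective on $K$ and some coordinate functional separates any two distinct points.

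The crux, and the step where the geometric rough path hypothesis is essential, is item (iv). Here I would use the shuffle-product identity: for a geometric rough path the product of two linear functionals of the signature is again a single linear functional of the same signature, namely $\langle l_1, S(x)\rangle\,\langle l_2, S(x)\rangle = \langle l_1\star l_2, S(x)\rangle$, where $l_1\star l_2$ denotes the shuffle product of the two functionals and again lies in $T((\R^d))^\ast$ with finitely many nonzero components whenever $l_1,l_2$ do. This multiplicativity is precisely what promotes the linear span $\mc{A}$ to an algebra, and it is exactly where the assumption that $S(x)$ be geometric is used — the shuffle relations hold for geometric (Stratonovich-type) signatures but fail for their It\^o counterparts.

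With (i)--(iv) in hand, Stone--Weierstrass yields that $\mc{A}$ is uniformly dense in $C(K,\R)$, delivering the desired $l$ and, through its top nonzero level, the depth $M$. I expect the main obstacle to be twofold: first, carefully establishing the shuffle identity (iv) in the geometric rough path setting, since this is the only place where the structural hypothesis enters and it is what makes the Stone--Weierstrass machinery applicable at all; and second, reconciling the topology — verifying continuity and compactness of $S$ on $K$ in the $p$-variation norm $\|\cdot\|_{\V^p}$ rather than in an intrinsic rough path metric — which requires the continuity estimates for the signature map to be phrased in the correct function space.
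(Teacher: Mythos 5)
The paper gives no proof of this theorem at all---it is quoted directly from \citet{NEURIPS2019_deepsig}---and your Stone--Weierstrass argument (constants from the zeroth signature level, point separation from signature uniqueness, the subalgebra property from the shuffle-product identity, which is exactly where the geometric hypothesis enters) is precisely the standard proof underlying that cited result, so your proposal is correct and takes essentially the same route as the source the paper relies on. The one point deserving care is (iii): for an arbitrary compact $K$ the stated hypotheses do not by themselves give injectivity of $S$ on $K$ (tree-like equivalent paths share a signature), so point separation requires either assuming injectivity or restricting to paths with a strictly monotone coordinate, which is exactly how the paper deploys the theorem via time-augmented paths $\hat B_t=(t,B_t)$ in Lemma~\ref{lemma:propbysig}.
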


\section{The Sig-DFP Algorithm}


We introduce two shorthand notations: if $x$ is a path indexed by $ t \in [0,T]$, then $x:=(x_t)_{0\le t\le T}$ denotes the whole path and $x_{s:t}:=(x_{u})_{s\le u\le t}$ denotes the path between $s$ and $t$.

\subsection{Propagation of Distribution with Signatures}\label{sec:propsig}
With the presence of common noise, existing algorithms mostly consider a nested-loop structure, with the inner one for idiosyncratic noise $W$ and the outer one for common noise $B$. 
More precisely, if one works with $N$ idiosyncratic Brownian paths $\{W^k\}_{k=1}^N$ and $N$ common Brownian paths $\{B^k\}_{k=1}^N$, then for each $B^j$, one needs to simulate $N$ paths $\{X^{i,j}\}_{i=1}^N$ defined by \eqref{def:Xt} over all idiosyncratic Brownian paths and solve the problem \eqref{def:J} associated to $B^j$. This requires a total of $N^2$ simulations of \eqref{def:Xt}. With a sufficiently large $N$, $\mu_t = \mc{L}(X_t \vert \mc{F}_t^B)$ is approximated well by $\frac{1}{N^2}\sum_{i,j=1}^N \delta_{X_t^{i,j}} \mathds{1}_{\omega^{(0,j)}}$ with $\omega^{0,j} \in \Omega$ corresponding to the trajectory $B^j$. The double summation is of $\mc{O}(N^2)$ which is computationally expensive for large $N$.



We shall address the aforementioned numerical difficulties by signatures. The key idea is to approximate $\mu_t$ by
\begin{align}
  &\mu_t \equiv \mc{L}(X_t \vert \mc{F}_t^B ) = \mc{L}(X_t \vert S(\hat B_t)) \approx \mc{L}(X_t \vert S^M(\hat B_t)), \nonumber\\
  &\text{with } \hat B_t = (t, B_t), \label{eq:propbysig}
\end{align}
where the equal sign comes from the unique characterization of signatures $S(\hat B)$ to the paths $B_{0:t}$, and the approximation is accurate for large $M$ due to the factorial decay property of the signature. The last term is then computed by machine learning methods, {\it e.g.}, by Generative Adversarial Networks (GANs). In addition, if the agents interact via some population average subject to common noise: $\mu_t = \EE[\iota(X_t)\vert \mc{F}_t^B]$, the approximation in \eqref{eq:propbysig} can be arbitrarily close to the true measure flow for sufficiently large $M$. The following lemma gives a precise statement.


\begin{lem}\label{lemma:propbysig}
Suppose $\mu_t=\EE[\iota(X_t)\vert \mc{F}_t^B]$ where $\iota: \RR^d \to \RR$ is a measurable function.  View $\mu_t$ as 
$\mu(t, B_{0:t})$ with $\mu: \mathcal{V}^p([0,T], \R^{n_0+1})\to \R$ continuous for some $p\in(2,3)$, and let $K\subset \mathcal{V}^p([0,T], \R^{n_0+1})$ be a compact set, then for any $\eps >0$, there exist a positive integer $M$ and a linear functional $l \in T((\RR^{n_0+1}))^\ast$, such that
\begin{equation}\label{lem:PropDistBySig}
    \sup_{t\in [0,T]}\sup_{\hat{B}\in K} |\mu_t - \langle l, S^M(\hat{B}_{0:t}) \rangle |<\epsilon.
\end{equation}
\end{lem}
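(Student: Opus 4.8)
The plan is to reduce Lemma~\ref{lemma:propbysig} to the universality result of Theorem~\ref{thm:sig_universality}, which is already stated for us. The essential observation is that the quantity $\mu_t = \EE[\iota(X_t)\vert \mcF_t^B]$ is, by the unique characterization of signatures, a deterministic functional of the augmented common-noise path $\hat B_{0:t} = (s, B_s)_{0\le s\le t}$. The hypothesis grants us exactly this: $\mu_t$ may be viewed as $\mu(t, B_{0:t})$ with $\mu$ continuous on $\mathcal{V}^p([0,T],\R^{n_0+1})$. So for each fixed $t$, applying Theorem~\ref{thm:sig_universality} to the continuous functional $\mu(t,\cdot)$ on the compact set $K$ yields a depth $M_t$ and a linear functional $l_t$ with $\sup_{\hat B\in K}|\mu_t - \langle l_t, S^{M_t}(\hat B_{0:t})\rangle| < \eps$. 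The genuine content of the lemma, beyond a single pointwise-in-$t$ application, is to obtain a \emph{single} pair $(M,l)$ that works \emph{uniformly} over all $t\in[0,T]$, which is what the double supremum in \eqref{lem:PropDistBySig} demands.

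First I would set up the problem as approximation on a time-augmented path space so that $t$ becomes part of the path rather than a free index. Concretely, I would consider paths of the form $\hat B_{0:t}$ indexed jointly by $(t,\hat B)$ and argue that the map $(t,\hat B)\mapsto \mu(t,B_{0:t})$ is continuous on the product $[0,T]\times K$; this uses the stated continuity of $\mu$ together with the fact that the restriction/stopping operation $\hat B\mapsto \hat B_{0:t}$ is continuous in the $\mathcal{V}^p$-norm and jointly continuous in $t$. The next step is to verify that the collection of stopped-and-augmented paths $\{\hat B_{0:t} : t\in[0,T],\ \hat B\in K\}$ lives in a single compact subset $\tilde K$ of $\mathcal{V}^p([0,T],\R^{n_0+1})$ (after the standard reparametrization to $[0,T]$), so that Theorem~\ref{thm:sig_universality} applies once to the whole family. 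Because the time coordinate is strictly increasing, each such path has a genuinely geometric signature and the unique-characterization hypothesis of the theorem is met, removing any tree-like ambiguity.

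With compactness and continuity in hand, I would then invoke Theorem~\ref{thm:sig_universality} a single time on $\tilde K$ for the continuous functional $\Phi(\hat B_{0:t}) := \mu(t, B_{0:t})$, producing one depth $M$ and one linear functional $l\in T((\R^{n_0+1}))^\ast$ with $\sup|\Phi - \langle l, S(\cdot)\rangle| < \eps$. Finally I would note that evaluating $S$ and then truncating at depth $M$ gives exactly $S^M(\hat B_{0:t})$, since the linear functional $l$ only pairs nontrivially with finitely many signature levels; rewriting the supremum over $\tilde K$ as the double supremum over $t\in[0,T]$ and $\hat B\in K$ yields \eqref{lem:PropDistBySig}.

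The main obstacle, and the step deserving the most care, is establishing that the time-stopped family $\{\hat B_{0:t}\}$ is genuinely compact (or at least precompact) in the $p$-variation topology and that $\Phi$ is continuous on it, rather than merely continuous in each argument separately. The $p$-variation norm is delicate: continuity of the stopping map $t\mapsto \hat B_{0:t}$ and joint continuity of $\Phi$ are not automatic, and one must be careful that stopping a path does not blow up its $p$-variation. I expect the honest proof to either embed everything into a reparametrized path space on a fixed interval so that a single application of Theorem~\ref{thm:sig_universality} is legitimate, or to argue a uniform-in-$t$ modulus of continuity for $\mu$ so that the pointwise approximations can be patched together with a common $(M,l)$. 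The rest of the argument is a direct translation of the universality theorem, so the compactness/continuity bookkeeping is where the real work lies.
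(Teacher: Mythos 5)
Your proposal is correct and follows essentially the same route as the paper's proof: reduce the double supremum to a single supremum over a compact family of time-stopped paths, then apply the universality theorem once. Concretely, the paper realizes your embedding by constant extension rather than reparametrization: for $t<T$ it sets $\tilde B^t_s := \hat B_{s\wedge t}$, so that $\tilde B^t_{0:T}\in\mathcal{V}^p([0,T],\R^{n_0+1})$, $S(\tilde B^t_{0:T}) = S(\hat B_{0:t})$ by Chen's identity \eqref{chens_identity}, and $\mu(\hat B_{0:t})=\mu(\tilde B^t_{0:T})$; compactness of the resulting set $\tilde K$ is asserted there, just as you flag it, without a detailed proof. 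The one substantive divergence is the truncation step. You dispose of it by claiming that the functional $l$ produced by Theorem~\ref{thm:sig_universality} pairs nontrivially with only finitely many signature levels; but as that theorem is stated and used in the paper, $l\in T((\R^{n_0+1}))^\ast$ is paired with the \emph{full} signature $S(x)$ and carries no support bound, so this claim is not granted. The paper instead proves the truncation bound explicitly: $|\langle l, S(\hat B_{0:T})-S^M(\hat B_{0:T})\rangle|\le \|l\|\,\sum_{i\ge M+1}\|\hat B_{0:T}^i\|$, and the factorial decay estimate \eqref{extension_eq} from the Extension Theorem, combined with the uniform control \eqref{eq:omega} available on the compact set, forces this tail to $0$ uniformly as $M\to\infty$. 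If you adopt the reading of universality in which $l$ acts on the truncated tensor algebra of depth $M$, your shortcut is legitimate; otherwise you must supply this factorial-decay argument. A minor point in your favor: applying universality directly on $\tilde K$, as you propose, is actually cleaner than the paper's order of steps, which establishes the bound \eqref{eq:universalatT} on $K$ and then invokes it for the supremum over $\tilde K$.
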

\begin{proof}
See Appendix~\ref{app:signature} for details due to the page limit.
\end{proof}


With all the above preparations, we now explain how the approximation to $\mu = \{\mu_t\}_{0 \leq t \leq T}$ using signatures is implemented. Given $N$ pairs of idiosyncratic and common Brownian paths $(W^i, B^i)$ and assume $\alpha_t$ in \eqref{def:Xt} is already obtained (which will be explained in Section~\ref{sec:Sig-DFP}), we first sample the optimized state processes $(X^i_t)_{0\le t\le T}$, producing $N$ samples $\{X^i\}_{i=1}^N$. Then the linear functional $l$ in Lemma \ref{lemma:propbysig} is approximated by implementing linear regressions on $\{S^M(\hat{B}^i_{0:t})\}_{i=1}^N$ with dependent variable $\{\iota(X^i_t)\}_{i=1}^N$ at several time stamps $t$, {\it i.e.}, 
\begin{align}\label{eq:ols}
    \hat l = & \argmin_{\bm\beta} \|\bm y - \bm X \bm\beta\|^2,  \\
    &\bm y = \{\iota(X^i_t)\}_{i=1}^N,\;    \bm X = \{S^M(\hat{B}^i_{0:t})\}_{i=1}^N. \nonumber
\end{align}
In all experiments in Section~\ref{sec:numerics}, we get decent approximations of $\mu$ on $[0,T]$ by considering only three time stamps $t=0, \frac{T}{2}, T$. Note that such a framework can also deal with multi-dimensional $\iota$, where the regression coefficients become a matrix. 

The choice in \eqref{eq:ols} is mainly motivated by Lemma~\ref{lemma:propbysig} stating $l$ is a linear functional, and by the probability model underlying ordinary linear regression (OLS) which interprets that the least square minimization \eqref{eq:ols} gives the best prediction of $E[\bm y | \bm X]$ restricting to linear relations. There are other benefits for choosing OLS: Once $\hat l$ is obtained in \eqref{eq:ols}, the prediction for unseen common paths is efficient: $    \mu_t(\tilde\omega)\approx \langle \hat l, S^M(\hat{B}_{0:t}(\tilde\omega)) \rangle \,\, \text{for any}\; \tilde\omega\; \text{and}\; t.$
Moreover, it is easy to integrate with fictitious play: averaging $\mu_t^{(n)}$ from different iterations, commonly needed in fictitious play, now means simply averaging  $\hat l^{(n)}$ over $n$. Next, we analyze the temporal and spatial complexity of using signatures and linear regression as below. 






{\it Temporal Complexity:} Suppose we discretize $[0,T]$ into $L$ time stamps: $0 = t_0 \leq t_1 \leq \ldots \leq t_L = T$, and simulate $N$ paths of $W, B$ and $X_t$. The simulation cost is of $\mc{O}(NL)$. For computing the truncated signature $S^M(\hat B)$ of depth $M$, we use the Python package Signatory \cite{kidger2020signatory}, yielding a complexity of $\mc{O}(NLp)$ where $p=\frac{(n_0+1)^{M+1}-1}{n_0}=\mc{O}(n_0^{M})$. Note that one can choose a large $N$ and reuse all sampled common noise paths $B$ for each iteration of fictitious play, thus the computation of $S^M(B)$ is done only once, and $S^M(\hat{B}_{0:t})$ is accessible in constant time for all $t$. The linear regression\footnote{We use the Python package scikit-learn \cite{scikit-learn} to do the linear regression.} (or Ridge regression) takes time $\mc{O}(Np^2)$. Thus, the total temporal complexity is of $\mc{O}(NLp+Np^2)$, which is linear in $N$ given\footnote{$M$ is usually small due to the factorial decay property of the signature. For $n_0$ not large, we have $p\ll N$.} $p\ll N$. Comparing to the nested-loop algorithm, where the cost of simulating SDEs is $\mc{O}(N^2L)$ and computing conditional distribution flows takes time $\mc{O}(N^2L)$, we claim that our algorithm reduced the temporal complexity by a factor of the sample size $N$ by using signatures. 




{\it Spatial Complexity:} In fictitious play, one may choose to average all past flow of measures $\mu^{(n)}$ as the given measures in \eqref{def:J}--\eqref{def:Xt} for the current iteration. Using signatures simplifies it to average $\hat l^{(n)}$. To update it between iterations, one needs to store the current average which costs $\mc{O}(p)$ of the memory. Combining $\mc{O}(NL)$ and $\mc{O}(NLp)$ for storing SDEs and truncated signatures, the overall spacial complexity is  $\mc{O}(NLp)$. The complexity of the nested-loop case is again $\mc{O}(N^2L)$, which we reduce by a factor of $N$. 


We conclude this section by the following remark: For the general case $\mu_t = \mc{L}(X_t\vert \mcF_t^B)$, though the linear regression is no longer available, the one-to-one mapping between $\mu$ and  $S(\hat{B})$ persists. Therefore, one can train a Generative Adversarial Network (GAN, \citet{goodfellow2014generative}) for generating samples following the distribution $\mu$ by taking truncated signatures as part of the network inputs.

\subsection{Deep Learning Algorithm}\label{sec:Sig-DFP}
\begin{figure}
    \centering
    \includegraphics[width = 0.45\textwidth]{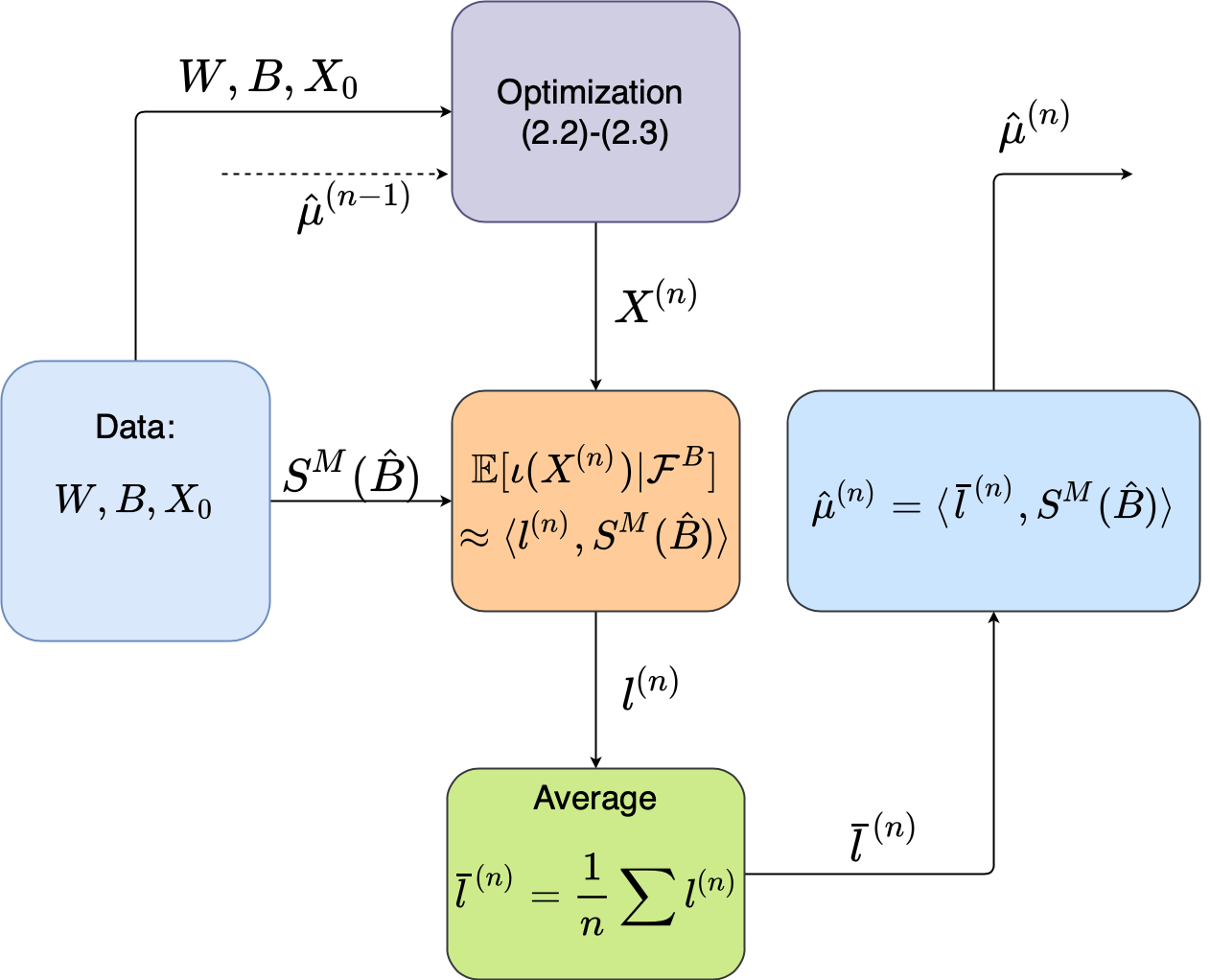}
    \caption{Flowchart of one iteration in the Sig-DFP Algorithm. Input: idiosyncratic noise $W$, common noise $B$, initial position $X_0$ and measure flow $\hat{\mu}^{(n-1)}$ from the last iteration. Output: measure flow $\hat{\mu}^{(n)}$ for the next iteration.}
    \label{fig:algo}
\end{figure}

Having explained the key idea on how to approximate $\mu$ efficiently, we describe the Sig-DFP algorithm in this subsection. The algorithm consists of repeatedly solving \eqref{def:J}--\eqref{def:Xt} for a given measure flow $\mu$ using deep learning in the spirit of \citet{han2016deep}, and passing the yielded $\mu$ to the next iteration by using signatures. The flowchart of the idea is illustrated in Figure \ref{fig:algo}. Consider a partition $\pi$ of $[0,T]: 0=t_0<\cdots<t_L=T$, denote by $\hat \mu^{(n-1)}$ the given flow of measures at stage $n$, the stochastic optimal control problem \eqref{def:J}--\eqref{def:Xt} is solved by
\begin{align}
    & \inf_{\{\alpha_k\}_{k=0}^{N-1}} \frac{1}{N}\sum_{i=1}^N \biggl(\sum_{k=0}^{L-1}f(t_k, X_k^i, \hat\mu^{(n-1)}_k(\omega^i), \alpha^i_k) \Delta_k \nonumber \\
    & \hspace{6em}+ g(X^i_L, \hat\mu_L^{(n-1)}(\omega^i)) \biggr), \label{def:J_discrete}
    \\
    & \text{where } X^{i}_{k+1} = X^{i}_{k} + b(t_k, X^i_k, \hat\mu^{(n-1)}_k(\omega^i), \alpha^i_k) \Delta_k \nonumber \\ & \hspace{6em} + \sigma(t_k, X^i_k, \hat\mu^{(n-1)}_k(\omega^i), \alpha^i_k) \Delta W^i_k \nonumber \\
      & \hspace{6em}+ \sigma^0(t_k, X^i_k, \hat\mu^{(n-1)}_k(\omega^i), \alpha^i_t) \Delta B^i_k,  \label{def:Xt_discrete} 
\end{align}
where we replace the subscript $t_k$ by $k$ to simplify notations, and let $\Delta_k = t_{k+1}-t_k$, $\Delta W^i_k = W^i_{t_{k+1}} - W^i_{t_{k}}$, $\Delta B^i_k = B^i_{t_{k+1}} - B^i_{t_{k}}$. Here, we use the superscript $i$ to represent the $i^{th}$ sample path and $\hat\mu^{(n-1)}_k(\omega^i)$ to emphasize the stochastic measure's dependence on the $i^{th}$ sample path of $B$ up to time $t_k$. The control $\alpha_k$ is then parameterized by neural networks (NNs) in the feedback form:
\begin{equation}
     \alpha_k^i:= \alpha_\varphi(t_k, X^i_k, \hat\mu^{(n-1)}_k(\omega^i); \varphi),
\end{equation}
where $\alpha_\varphi$ denotes the NN map with parameters $\varphi$, and searching the infimum in \eqref{def:J_discrete} is translated into minimizing $\varphi$. The yielded optimizer $\varphi^\ast$ gives $\alpha_k^{i, \ast}$, with which the optimized state process paths $\{X^{i, \ast}\}_{i=1}^N$ are simulated and its conditional law $\mc{L}(X^{\ast} \vert \mc{F}^B)$, denoted by $\mu^{(n)}$, is approximated using signatures as described in Section~\ref{sec:propsig}. This finishes one iteration of fictitious play. Denote by $\tilde\mu^{(n)}$ the approximation of $\mu^{(n)}$, we then pass $\tilde\mu^{(n)}$ to the next iteration via updating $\hat\mu^{(n)} = \frac{1}{n}\tilde\mu^{(n)} + \frac{n-1}{n}\hat \mu^{(n-1)}$ by averaging the coefficients in \eqref{eq:ols}.

We summarize it in Algorithm \ref{alg:sig-dfp}, with implementation details deferred to Appendix~\ref{app:SigDFP}. Note that the simulation of $X^{i, (n)}$ and $J_B(\varphi, \Bar{\mu}^{(n-1)})$ uses the equations \eqref{def:Xt_algo} and \eqref{def:J_algo} in Appendix~\ref{app:SigDFP}, respectively. 



\begin{algorithm}[tb]
   \caption{The Sig-DFP Algorithm}
   \label{alg:sig-dfp}
\begin{algorithmic}
   \STATE {\bfseries Input:} $b, \sigma, \sigma_0, f, g, \iota$ and $X^i_0, (W^i_{t_k})_{k=0}^L, (B^i_{t_k})_{k=0}^L$ for $i=1,2,\dots, N$; $N_{\text{round}}$: rounds for FP; 
   
   $B$: minibatch size; $N_{\text{batch}}$: number of minibatches. 
   \STATE Compute the signatures of $\hat{B}^i_{0:t_k}$ for $i=1, \dots, N$, $k=1,\dots, L$;
   \STATE Initialize $\hat{\mu}^{(0)}$, $\varphi$;
   \FOR{$n=1$ {\bfseries to} $N_{\text{round}}$}
   \FOR{$r=1$ {\bfseries to} $N_{\text{batch}}$}
   \STATE Simulate the $r^{th}$ minibatch of $X^{i,(n)}$ using $\hat{\mu}^{(n-1)}$ and compute $J_B(\varphi, \hat{\mu}^{(n-1)})$;
   \STATE Minimize $J_B(\varphi, \hat{\mu}^{(n-1)})$ over $\varphi$, then update $\alpha_\varphi$;
   \ENDFOR
   \STATE Simulate $X^{i, (n)}$ with the optimized $\alpha_\varphi^\ast$, 
   for $i=1, \dots, N$;
   \STATE Regress $\iota(X^{i, (n)}_0), \iota(X^{i, (n)}_{L/2}), \iota(X^{i, (n)}_L)$ on $S^M(\hat{B}^i_{0:0})$, $S^M(\hat{B}^i_{0:t_{L/2}})$, $S^M(\hat{B}^i_{0:t_L})$ to get $l^{(n)}$; 
   \STATE Update $\bar{l}^{(n)} = \frac{n-1}{n}\bar{l}^{(n-1)} + \frac{1}{n}l^{(n)}$;
   \STATE Compute $\hat{\mu}^{(n)}$ by $\hat{\mu}^{(n)}_k(\omega^i) = \langle \Bar{l}^{(n)}, S^M(\hat{B}^i_{0:t_k}) \rangle$, for $i=1,2,\dots, N, k=1,\dots, L$;
   \ENDFOR
   \STATE {\bfseries Output:} the optimized $\alpha_\varphi^\ast$ and  $\bar{l}^{(N_\text{round})}$.
\end{algorithmic}
\end{algorithm}


\begin{thm}[Convergence analysis]\label{thm:cvg} 
 Let $(\alpha^\ast$,$\mu^\ast)$ be the mean-field equilibrium in Definition~\ref{defn:MFG},  $\alpha^{(n)}$ be the optimal control, and $\mu^{(n)}$ be the measure flow of the optimized state process after the $n^{th}$ iteration of fictitious play, and $\tilde\mu^{(n)}$ be the approximation by truncated signatures. Under Assumption~\ref{assump:cvg} and $\displaystyle \sup_{t \in [0,T]} \EE[\mc{W}_2^2(\tilde{\mu}^{(n)}_t, \mu^{(n)}_t)] \leq \eps$, we have
\begin{equation}
\begin{aligned}
     \sup_{t \in [0,T]} \EE&[\mc{W}_2^2(\tilde\mu^{(n)}_t, \mu^\ast_t)] + \int_0^T \EE|\alpha_t^{(n)} - \alpha_t^\ast|^2 \ud t  \notag \\
   &\leq C(q^n \sup_{t \in [0,T]} \EE[\mc{W}_2^2(\mu^{(0)}_t, \mu^\ast_t)] + \eps),\label{eq:cvg}
   \end{aligned}
\end{equation}
  for some constants $C >0$ and $0 < q < 1$, where $\mc{W}_2$ denotes the 2-Wasserstein metric. 
\end{thm}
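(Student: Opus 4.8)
The plan is to separate the quantity on the left of \eqref{eq:cvg} into a \emph{fictitious-play error} comparing the exact iterates $\mu^{(n)}$ with the equilibrium $\mu^\ast$, and a \emph{signature-approximation error} controlled directly by the hypothesis $\sup_{t}\EE[\mc{W}_2^2(\tilde\mu^{(n)}_t,\mu^{(n)}_t)]\le\eps$. Writing $d_n^2:=\sup_{t}\EE[\mc{W}_2^2(\mu^{(n)}_t,\mu^\ast_t)]$, the triangle inequality for $\mc{W}_2$ together with $(a+b)^2\le 2a^2+2b^2$ gives $\sup_{t}\EE[\mc{W}_2^2(\tilde\mu^{(n)}_t,\mu^\ast_t)]\le 2\eps+2d_n^2$, so the whole theorem reduces to proving $d_n^2\le C(q^n d_0^2+\eps)$ and bounding the control term $\int_0^T\EE|\alpha^{(n)}_t-\alpha^\ast_t|^2\ud t$ by the same right-hand side.

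The core step is a contraction estimate for the best-response map. I would introduce $\Phi(\mu):=\mc{L}(X^{\hat\alpha^\mu}\mid\mcF^B)$, the conditional law produced by solving the random-coefficient control problem \eqref{def:J}--\eqref{def:Xt} for a given input flow $\mu$; by Definition~\ref{defn:MFG} the equilibrium satisfies $\mu^\ast=\Phi(\mu^\ast)$, while $\mu^{(n)}=\Phi(\hat\mu^{(n-1)})$. Representing $\hat\alpha^\mu$ through the stochastic maximum principle (equivalently a conditional McKean--Vlasov FBSDE), I would show under Assumption~\ref{assump:cvg} that perturbing the input flow perturbs the coefficients in a Lipschitz manner, and then close the estimate with the Burkholder--Davis--Gundy and Gronwall inequalities together with the convexity/monotonicity of the Hamiltonian assumed there. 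This should yield simultaneously a contraction $d(\Phi(\mu),\Phi(\nu))\le\sqrt{q}\,d(\mu,\nu)$ with $q<1$ in the metric $d(\mu,\nu)^2:=\sup_t\EE[\mc{W}_2^2(\mu_t,\nu_t)]$, and the Lipschitz bound $\int_0^T\EE|\hat\alpha^\mu_t-\hat\alpha^\nu_t|^2\ud t\le C\,d(\mu,\nu)^2$; taking $\nu=\mu^\ast$ then controls the control term by $d_{n-1}^2$.

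With the contraction in hand I would set up a perturbed fixed-point recursion. Inserting the signature error $d(\tilde\mu^{(n-1)},\mu^{(n-1)})\le\sqrt\eps$ into the contraction gives $d(\mu^{(n)},\mu^\ast)\le\sqrt q\big(d(\mu^{(n-1)},\mu^\ast)+\sqrt\eps\big)$, and unrolling this linear recursion produces the geometric decay $q^{n/2}d_0$ of the initial error plus a summable tail $\frac{\sqrt q}{1-\sqrt q}\sqrt\eps$; squaring yields $d_n^2\le 2q^n d_0^2+C\eps$. If one keeps the fictitious-play averaging $\hat\mu^{(n)}=\frac{n-1}{n}\hat\mu^{(n-1)}+\frac1n\tilde\mu^{(n)}$ explicitly, I would instead propagate the bound through the joint convexity of $\mc{W}_2^2$ under mixtures. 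Combining with the first paragraph's reduction and the control estimate then gives \eqref{eq:cvg}.

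I expect the main obstacle to be the contraction estimate of the previous paragraph in the full generality claimed, namely heterogeneous and extended MFGs \emph{with} common noise: the control problem has $\mcF^B$-measurable random coefficients, the fixed point is taken over conditional laws given $\mcF^B$, and in the extended case the coefficients depend on $\mc{L}(\alpha_t\mid\mcF^B)$ as well, so the associated FBSDE is of conditional McKean--Vlasov type and the Lipschitz-in-measure constants must be small enough (as encoded in Assumption~\ref{assump:cvg}) to force $q<1$. A secondary technical point is ensuring that the per-iteration signature error $\eps$, although re-injected at every stage, is damped by the contraction rather than accumulating, which is exactly what the geometric-series tail above guarantees.
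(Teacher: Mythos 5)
Your proposal follows essentially the same route as the paper's proof: the paper likewise defines the best-response map $\Phi(\mu) := \{\mc{L}(X_t^{\mu}\mid \mcF_t^B)\}_{0\le t\le T}$ through the stochastic maximum principle and an FBSDE stability estimate, obtains the contraction $\sup_{t}\EE[\mc{W}_2^2(\Phi(\mu)_t,\Phi(\mu')_t)] \leq C_{K,T}\,T\,\sup_{t}\EE[\mc{W}_2^2(\mu_t,\mu'_t)]$, unrolls the perturbed Picard iteration with the signature error $\eps$ re-injected at every stage to get $\frac{2-2q^n}{1-q}\eps + q^n \sup_t \EE[\mc{W}_2^2(\mu_t^{(0)},\mu_t^\ast)]$, and bounds the control term via the Lipschitz property of the Hamiltonian minimizer $\hat\alpha$ combined with the same FBSDE estimate. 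The only minor differences are bookkeeping: the paper sources $q = 2C_{K,T}T < 1$ from taking $T$ sufficiently small rather than from smallness of the Lipschitz-in-measure constants, and, like your main argument, it treats the iteration as a pure best response to $\tilde\mu^{(n-1)}$ without ever handling the fictitious-play averaging (which you at least flag and propose to absorb via convexity of $\mc{W}_2^2$ under mixtures).
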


Moreover, if we consider a partition of $[0,T]: 0=t_0<\cdots<t_L=T$, and define $\pi(t) = t_k$ for $t \in [t_k, t_{k+1})$ with $\|\pi\| = \max_{1 \leq k < L}|t_{k} - t_{k-1}|$, then

\begin{thm}[Convergence in discrete time]\label{thm:numcvg} Let $\mu_{t_k}^{(n)}$ be the conditional law of the discretized optimal process $X_{t_k}^{(n)}$ after the $n^{th}$ iteration of fictitious play (cf. \eqref{def:Xt_discrete}), and $\tilde\mu^{(n)}_{t_k}$ be the approximation by truncated signatures. Under Assumption~\ref{assump:cvg} and $\displaystyle \sup_{0 \leq k \leq L} \EE[\mc{W}_2^2(\tilde{\mu}^{(n)}_{t_k}, \mu^{(n)}_{t_k})] \leq \eps$, one has
\begin{equation}
\begin{aligned}
     \sup_{t \in [0,T]} \EE&[\mc{W}_2^2(\tilde\mu^{(n)}_{\pi(t)}, \mu^\ast_t)] + \int_0^T \EE|\alpha_{\pi(t)}^{(n)} - \alpha_t^\ast|^2 \ud t  \notag \\
   &\leq C(q^n  \sup_{0 \leq k \leq L}\EE[\mc{W}_2^2(\mu^{(0)}_{t_k}, \mu^\ast_{t_k})] + \eps + \|\pi\|),
   \end{aligned}
\end{equation} 
for some constants $C >0$ and $0 < q < 1$, where $\alpha_{t_k}^{(n)} = \hat \alpha(t_k, X_{t_k}, Y_{t_k}, \tilde \mu_{t_k}^{(n-1)})$, and $(X_t, Y_t)$ solves \eqref{def:FBSDE} with $\mu$ replaced by $\tilde \mu_{t_k}^{(n-1)}$.
\end{thm}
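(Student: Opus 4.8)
The plan is to run the same fictitious-play contraction mechanism that underlies Theorem~\ref{thm:cvg}, but applied to the Euler-discretized conditional McKean--Vlasov FBSDE \eqref{def:FBSDE}/\eqref{def:Xt_discrete}, and to absorb the extra error from replacing the continuous dynamics by their time-$\pi$ discretization into the new $\|\pi\|$ term. Concretely, I would track the stage-$n$ error
\begin{equation}
e_n := \sup_{0 \leq k \leq L} \EE[\mc{W}_2^2(\tilde\mu^{(n)}_{t_k}, \mu^\ast_{t_k})] + \int_0^T \EE|\alpha^{(n)}_{\pi(t)} - \alpha^\ast_t|^2 \ud t
\end{equation}
and seek a recursion $e_n \leq q\, e_{n-1} + C(\eps + \|\pi\|)$, whose geometric summation yields the claim once the temporal-regularity term $\sup_t \EE[\mc{W}_2^2(\mu^\ast_{\pi(t)}, \mu^\ast_t)]$ is added to pass from $\mu^\ast_{\pi(t)}$ to $\mu^\ast_t$. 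The per-stage analysis splits by a triangle inequality into the signature/regression error $\mc{W}_2(\tilde\mu^{(n)}_{t_k}, \mu^{(n)}_{t_k})$, already bounded in mean square by $\eps$ by hypothesis; the discretization error between the discrete and continuous solutions of \eqref{def:FBSDE} driven by the frozen input $\tilde\mu^{(n-1)}$; and the contraction coming from the difference $\tilde\mu^{(n-1)}$ versus $\mu^\ast$.

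The three auxiliary estimates I would establish are as follows. \emph{(i) Strong Euler error:} under the Lipschitz and growth content of Assumption~\ref{assump:cvg}, the scheme \eqref{def:Xt_discrete} for the coupled forward--backward system satisfies $\sup_{0\le k\le L}\EE|X^{(n)}_{t_k} - X^{(n),\mathrm{cont}}_{t_k}|^2 + \sup_{0\le k\le L}\EE|Y^{(n)}_{t_k} - Y^{(n),\mathrm{cont}}_{t_k}|^2 \le C\|\pi\|$, uniformly over the input measure, via discrete Gronwall together with the decoupling-field regularity supplied by the monotonicity structure. \emph{(ii) Temporal regularity of the equilibrium:} an It\^o/BDG estimate on the optimal state $X^{\alpha^\ast}$ gives $\sup_t\EE[\mc{W}_2^2(\mu^\ast_{\pi(t)}, \mu^\ast_t)] \le C\|\pi\|$, since the conditional-law flow inherits the $1/2$-H\"older-in-time regularity of the driving diffusion. \emph{(iii) Lipschitz stability of the feedback:} since $\alpha^{(n)}_{t_k} = \hat\alpha(t_k, X_{t_k}, Y_{t_k}, \tilde\mu^{(n-1)}_{t_k})$ with $\hat\alpha$ Lipschitz in $(x,y,\mu)$, the control error is controlled by the state, costate, and measure errors, converting the estimates of (i) into the $\int_0^T\EE|\alpha^{(n)}_{\pi(t)}-\alpha^\ast_t|^2\ud t$ contribution.

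Combining (i)--(iii) with the per-stage contraction, the discretization and signature errors enter \emph{additively}, so each fictitious-play step contracts the accumulated error by $q<1$ while injecting a fixed perturbation of size $C(\eps+\|\pi\|)$. Solving $e_n\le q\,e_{n-1}+C(\eps+\|\pi\|)$ gives $e_n \le q^n e_0 + C(\eps+\|\pi\|)/(1-q)$; identifying $e_0$ with the discrete initial error $\sup_{0\le k\le L}\EE[\mc{W}_2^2(\mu^{(0)}_{t_k},\mu^\ast_{t_k})]$ and folding $1/(1-q)$ into $C$ reproduces the claimed inequality.

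The main obstacle is item (i): proving the $O(\|\pi\|)$ strong Euler rate for the \emph{conditional} McKean--Vlasov FBSDE with common noise \emph{uniformly in the stage} $n$. The measure argument $\tilde\mu^{(n-1)}$ is itself a random, already-discretized and regressed flow, so one must ensure the Euler constant $C$ neither depends on $n$ nor degrades through the fictitious-play averaging, and that the forward error and the backward (costate) error -- coupled because $Y$ feeds the feedback control -- close simultaneously. This is precisely where the monotonicity/convexity part of Assumption~\ref{assump:cvg} is needed: to furnish the Lipschitz decoupling field that controls the backward equation, and to guarantee the factor $q<1$ that prevents the additive discretization error from accumulating across iterations.
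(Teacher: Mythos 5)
Your proposal follows essentially the same route as the paper's proof: rerun the contraction argument of Theorem~\ref{thm:cvg} (the fictitious-play recursion in which the signature error $\eps$ enters additively and is summed geometrically), and augment it with an $\mc{O}(\|\pi\|)$ bound on the gap between the continuous optimal process and the discretized process of \eqref{def:Xt_discrete}, which also covers the passage from $\mu^\ast_{\pi(t)}$ to $\mu^\ast_t$. The only difference is one of packaging: the paper obtains your auxiliary estimates (i)--(ii) in a single stroke by invoking Lemma~14 of \citet{carmona2019convergence} with $N=1$, whereas you propose to derive the strong Euler and temporal-regularity bounds directly via discrete Gronwall and BDG arguments, which is a legitimate (if longer) way to supply the same ingredient.
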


The proofs of Theorems~\ref{thm:cvg} and \ref{thm:numcvg} are given in Appendix~\ref{app:algorithm} due to the page limit. 



Remark that the Sig-DFP framework is flexible. We choose to solve \eqref{def:J}-\eqref{def:Xt} by direct parameterizing control policies $\alpha_t$ for the sake of easy implementation and the possible exploration of multiple mean-field equilibria. If the equilibrium is unique, with proper conditions on the coefficients $b, \sigma, \sigma^0, f$ and $g$, one can reformulate \eqref{def:J}-\eqref{def:Xt} into McKean-Vlasov FBSDEs or stochastic FP/HJB equations, and solve them by fictitious play and propagating the common noise using signatures.


\section{Experiments}\label{sec:numerics}

In this section, we present the performance of Sig-DFP for three examples: homogeneous, heterogeneous, and heterogeneous extended MFGs. A relative $L^2$ metric will be used for performance measurement, defined for progressively measurable random processes as
\begin{equation}
    L^2_R(x, \hat{x}) := \sqrt{\frac{\EE[\int_0^T \|x_t-\hat{x}_t\|^2\ud t]}{\EE[\int_0^T \|x_t\|^2\ud t]}}, \label{def: rela_L2}
\end{equation}
where $x$ is a benchmark process and $\hat{x}$ is its prediction. We shall use stochastic gradient descent (SGD) optimizer for all three experiments. Training processes are done on a server with Intel Core i9-9820X (10 cores, 3.30 GHz) and RTX 2080 Ti GPU, and training time will be reported in Appendix \ref{app:SigDFP}. Implementation codes are available at \url{https://github.com/mmin0/SigDFP}.


{\bf Data Preparation.} For all three experiments, the size of both training and test data is  $N=2^{15}$, and the size of validation data is $N/2$. We fix $T=1$ and discretize $[0,1]$ by $t_{k}=\frac{k}{100},\; k=0,1,\dots,100$. Initial states are generated independently by $X_0^i \sim\mu_0$, with $\mu_0= U(0, 1)$ as the uniform distribution. The idiosyncratic Brownian motions $W$ and common noises $B$ are generated by antithetic variates for variance reduction, {\it i.e.}, we generate the first half samples $(W^i, B^i)$ and get the other half $(-W^i, -B^i)$ by flipping.

{\bf Benchmarks.} The examples below are carefully chosen with analytical benchmark solutions. Due to the space limit, we provide the details in Appendix~\ref{app:Benchmark}.

{\bf Linear-Quadratic MFGs.} We first consider a Linear-Quadratic MFG with common noise proposed in \citet{carmona2015mean}, formulated as below:
\begin{align} 
    & \inf_{\alpha} \EE\biggl\{\int_0^T \left[
    \frac{\alpha_t^2}{2}-q\alpha_t(m_t-X_t)+\frac{\epsilon}{2}(m_t-X_t)^2
    \right]\ud t \nonumber \\ & \hspace{5em}+\frac{c}{2}(m_T-X_T)^2 \biggr\}, \\
    &\text{where }
    \ud X_t = [a(m_t-X_t)+\alpha_t]\ud t \nonumber\\& \hspace{7em} + \sigma(\rho \ud B_t + \sqrt{1-\rho^2}\ud W_t). \label{def:LQ_SDE}
\end{align}
Here $m_t = \EE[X_t|\mcF^B_t]$ is the conditional population mean, $\rho \in[0,1]$ characterizes the noise correlation between agents, and $q, \epsilon, c, a, \sigma$ are positive constants. The agents have homogeneous preferences and aim to minimize their individual costs. We assume $q\le \epsilon^2$ so that the Hamiltonian is jointly convex in state and control variables, ensuring a unique mean-field equilibrium.


{\it Training \& Results.} $\alpha_\varphi$ is a feedforward NN with two hidden layers of width 64. The truncated signature depth is chosen at $M=2$. The model is trained for $500$ iterations of fictitious play. The optimized state process $\hat{X}$ and its conditional mean $\hat{m}$ generated by test data are shown in Figures~\ref{fig:LQ}a and \ref{fig:LQ}b. The minimized cost after each iteration computed using validation data is given in Figure~\ref{fig:LQ}c, where one can see a rapid convergence to the benchmark cost. During the experiments, we notice a slow convergence speed when using the average of $m^{(n)}$ in \eqref{def:LQ_SDE}. This is because the initial guess $m^{(0)}$ is in general far from the truth. Therefore, for the first half of iterations, we simply use the previous-step result $m^{(n-1)}$. The learning rate is set as 0.1 for the first half and 0.01 for the second half of training. The relative $L^2$ errors for test data are listed in Table \ref{tab:LQ}.

\begin{table}[ht]
\vspace{-1em}
\caption{Relative $L^2$ errors on test data for the LQ MFG.}
\label{tab:LQ}
\vskip 0.1in
    \begin{center}
    \begin{small}
    \begin{sc}
    \begin{tabular}{lcccr}
    \toprule
         &  SDE $X_t$ & Control $\alpha_t$ & Equilibrium $m_t$\\
         \midrule
        $L^2_R$ & $0.0031$  & $0.0044$ & $0.058$ \\
        \bottomrule
    \end{tabular}
    \end{sc}
    \end{small}
    \end{center}
    \vspace{-1em}
\end{table}

\begin{figure*}[ht]
    \centering
    \subfloat[$X_t$]{
         \includegraphics[width=0.6\columnwidth]{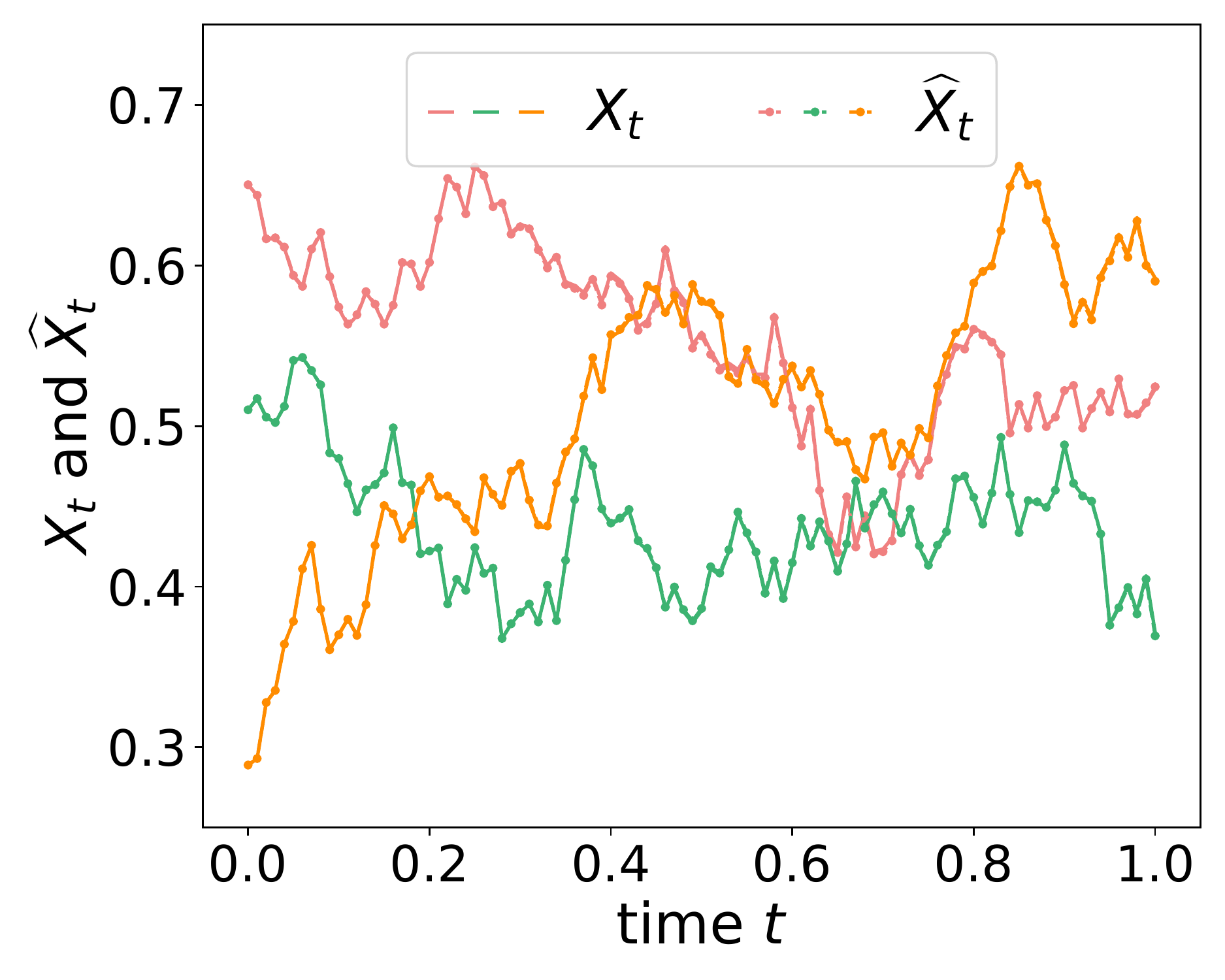}}
    \subfloat[$m_t = \EE(X_t \vert \mcF_t^B)$]{
         \includegraphics[width=0.6\columnwidth]{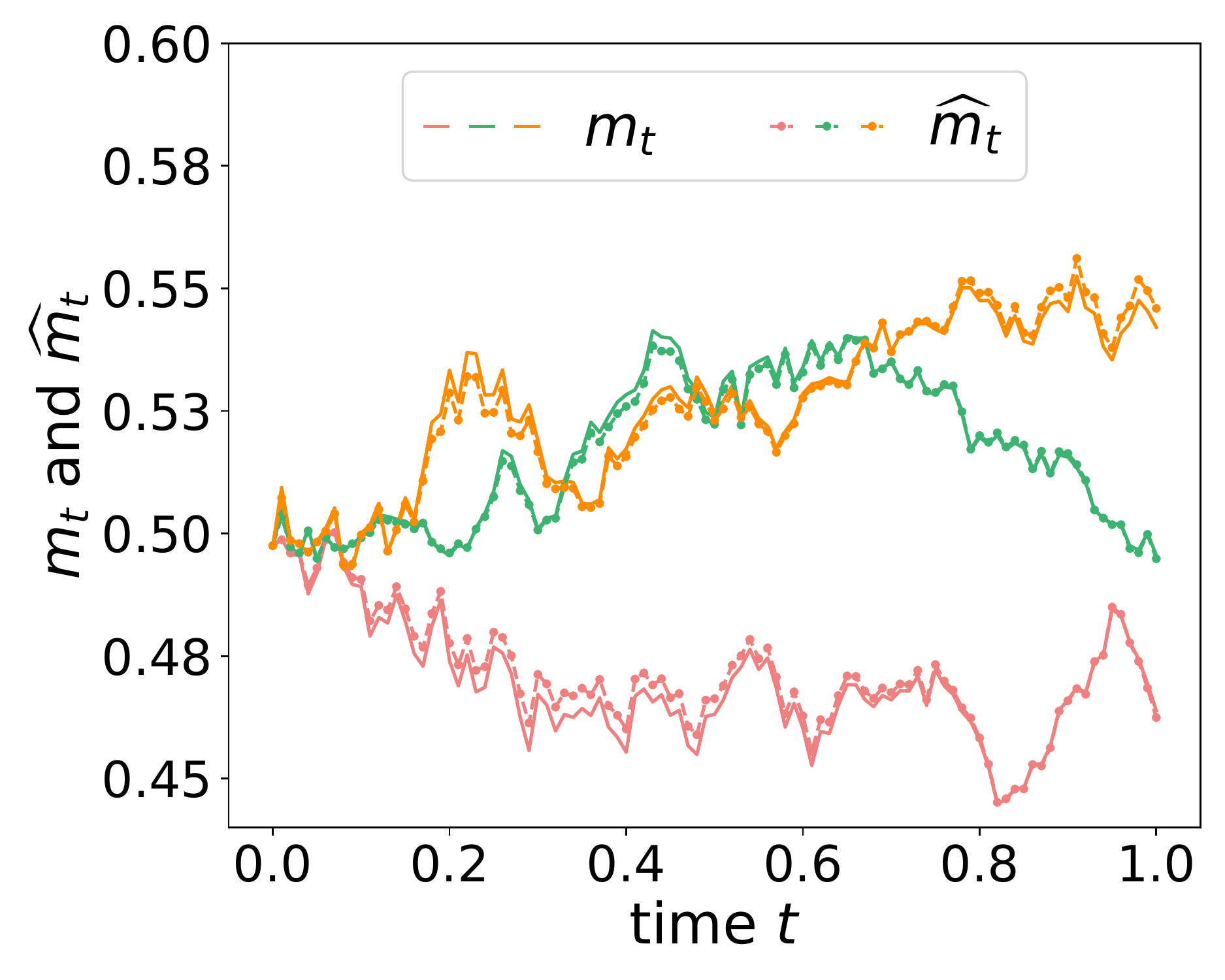}}
    \subfloat[Minimized Cost]{
         \includegraphics[width=0.6\columnwidth]{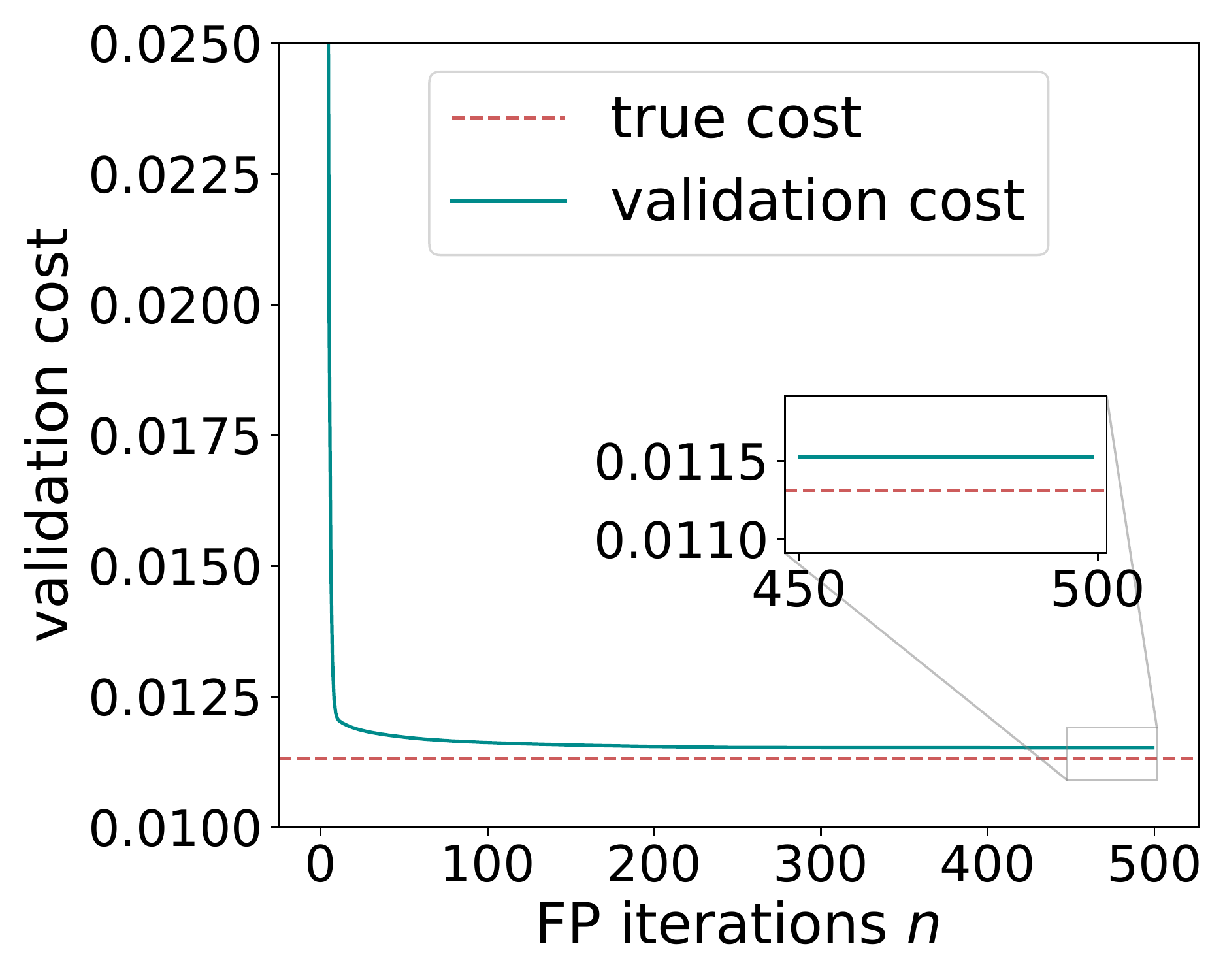}}
    \caption{Panels (a) and (b) give three trajectories of $X_t$, $m_t = \EE[X_t \vert \mcF_b^B]$ (solid lines) and their approximations (dashed lines) using different $(X_0, W, B)$ from test data. Panel (c) shows the minimized cost computed using validation data over fictitious play iterations.
    Parameter choices are: $\sigma=0.2, q=1, a=1, \epsilon=1.5, \rho=0.2, c=1$, $x_0 \sim U(0,1)$.}
    \label{fig:LQ}
\end{figure*}

{\bf Mean-Field Portfolio Game.} 
Our second experiment is performed on a heterogeneous MFG proposed by \citet{lacker2018mean}, where the agent's preference is different, characterized by a type vector $\zeta$ which is random and drawn at time 0. They all aim to maximize their exponential utility of terminal wealth compared to the population average:
\begin{equation}
    \sup_{\pi} \EE\left[ -\exp{\left(-\frac{1}{\delta}(X_T-\theta m_T) \right)} \right],
\end{equation}
where the dynamics are 
\begin{equation}
    \ud X_t = \pi_t(\mu \ud t + \nu \ud W_t + \sigma \ud B_t), \quad X_0=\xi.
\end{equation}

Here $m$ represents the conditional mean $m_t:=\EE[X_t|\mcF^B_t]$, and $\zeta=(\xi, \delta, \theta, \mu, \nu, \sigma)$ is random.

{\it Training \& Results.} We use truncated signatures of depth $M=2$ and a feedforward NN $\pi_\varphi$ with 4 hidden layers\footnote{Since agents are heterogeneous characterized by their type vectors $\zeta$, $\pi_\varphi$ takes $(\zeta, t, X_t, m_t)$ as inputs. Hidden neurons in each layer are (64, 32, 32, 16).} to approximate $\pi$. We train our model with 500 iterations of fictitious play. The learning rate starts at $0.1$ and is reduced by a factor of $5$ every 200 rounds. The relative $L^2$ errors evaluated under test data are listed in Table \ref{tab:Invest}. Figure \ref{fig:Invest} compares $X$ and $m$ to their approximations, and plots the maximized utilities.


\begin{table}[ht]
\vspace{-1em}
\caption{Relative $L^2$ errors on test data for MF Portfolio Game.}
\label{tab:Invest}
\vskip 0.1in
    \begin{center}
    \begin{small}
    \begin{sc}
    \begin{tabular}{lcccr}
    \toprule
         &  SDE $X_t$ & Invest $\pi_t$ & Equilibrium $m_t$\\
         \midrule
        $L^2_R$  & $0.068$  & $0.035$ & $0.085$ \\
        \bottomrule
    \end{tabular}
    \end{sc}
    \end{small}
    \end{center}
    \vspace{-1.5em}
\end{table}


\begin{figure*}[htb!]
    \centering
    \subfloat[$X_t$]{
         
         \includegraphics[width=0.6\columnwidth]{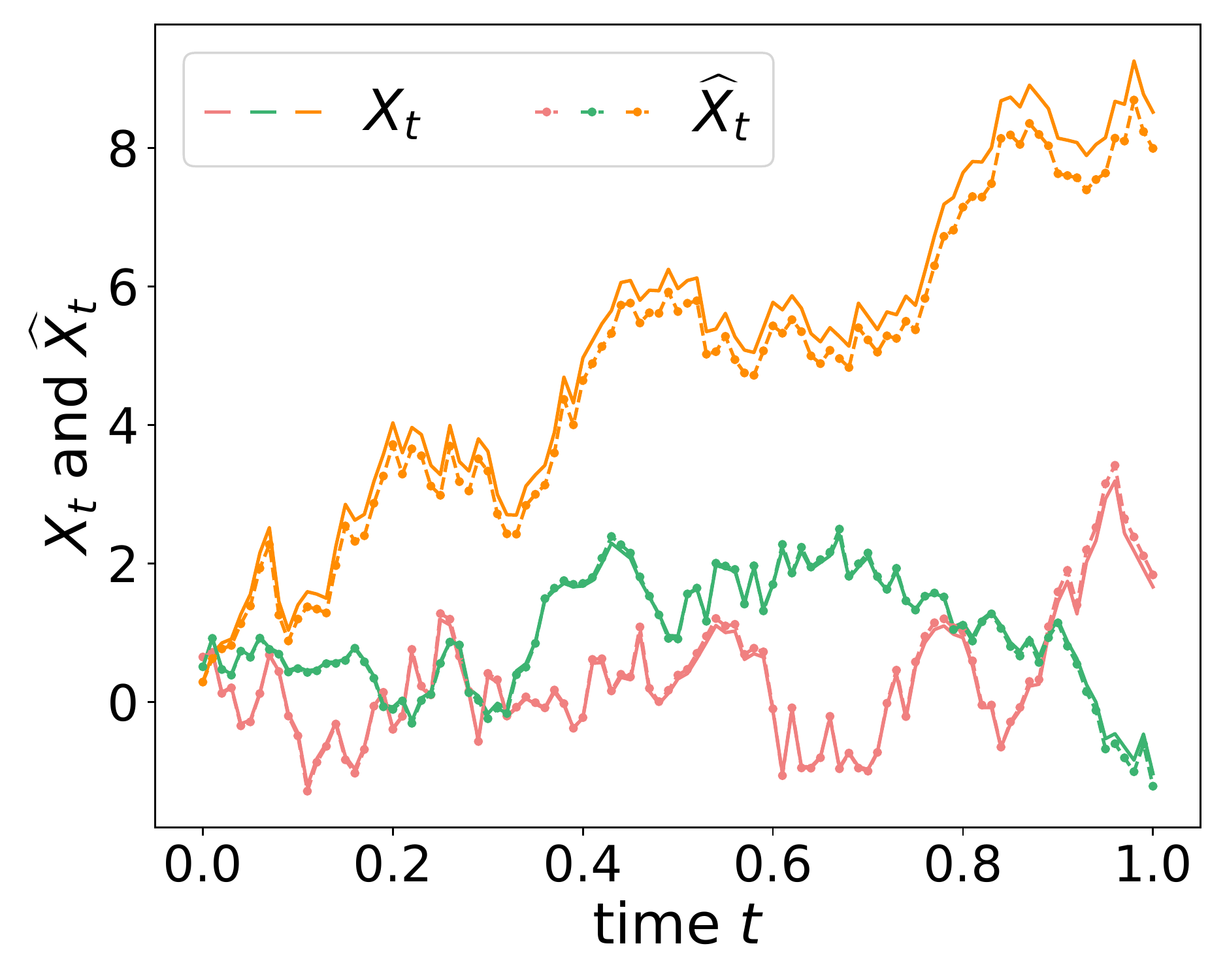}
     }
     \subfloat[$m_t = \EE(X_t \vert \mcF_t^B)$]{
         
         \includegraphics[width=0.6\columnwidth]{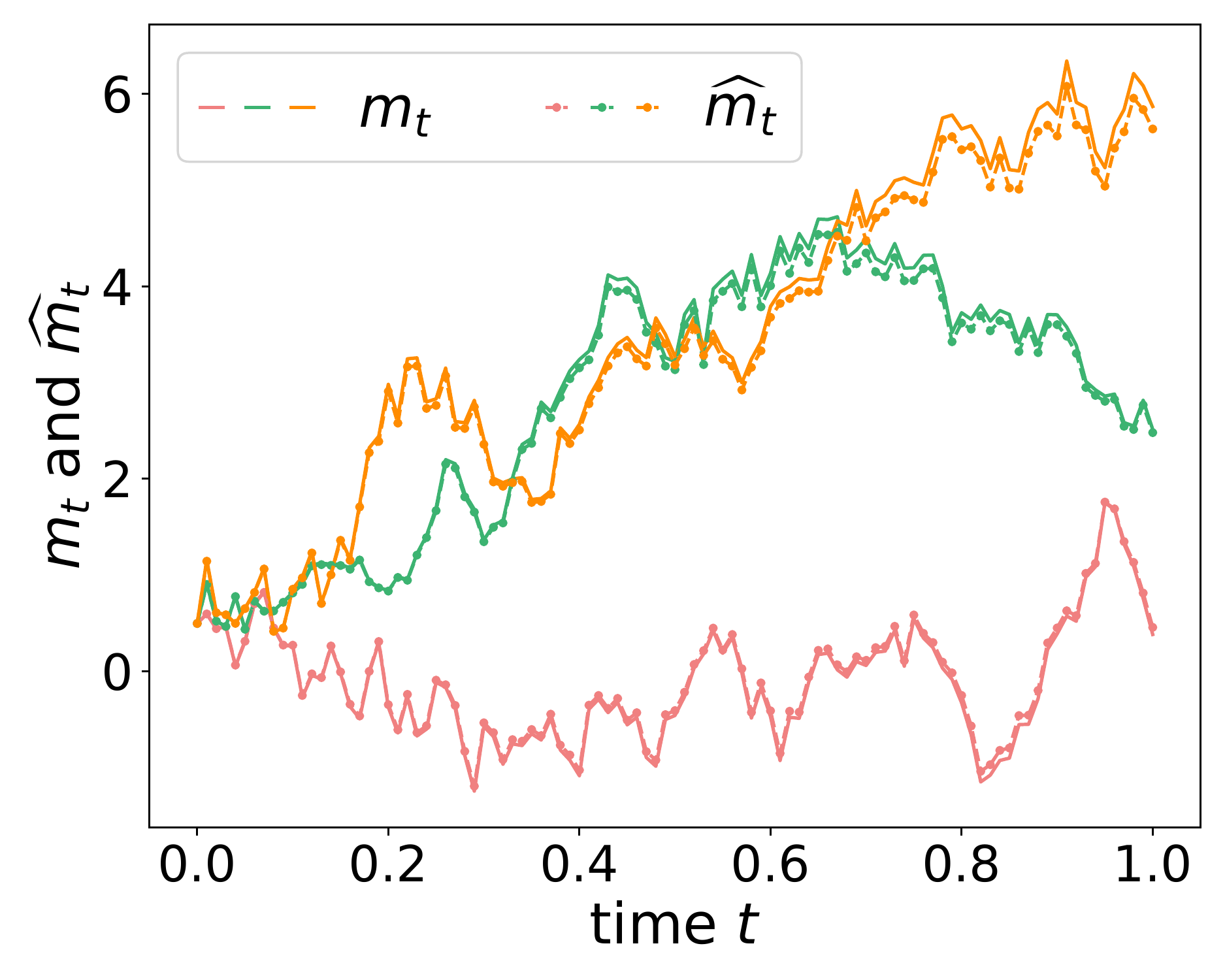}
     }
     \subfloat[Maximized Utility]{
         
         \includegraphics[width=0.6\columnwidth]{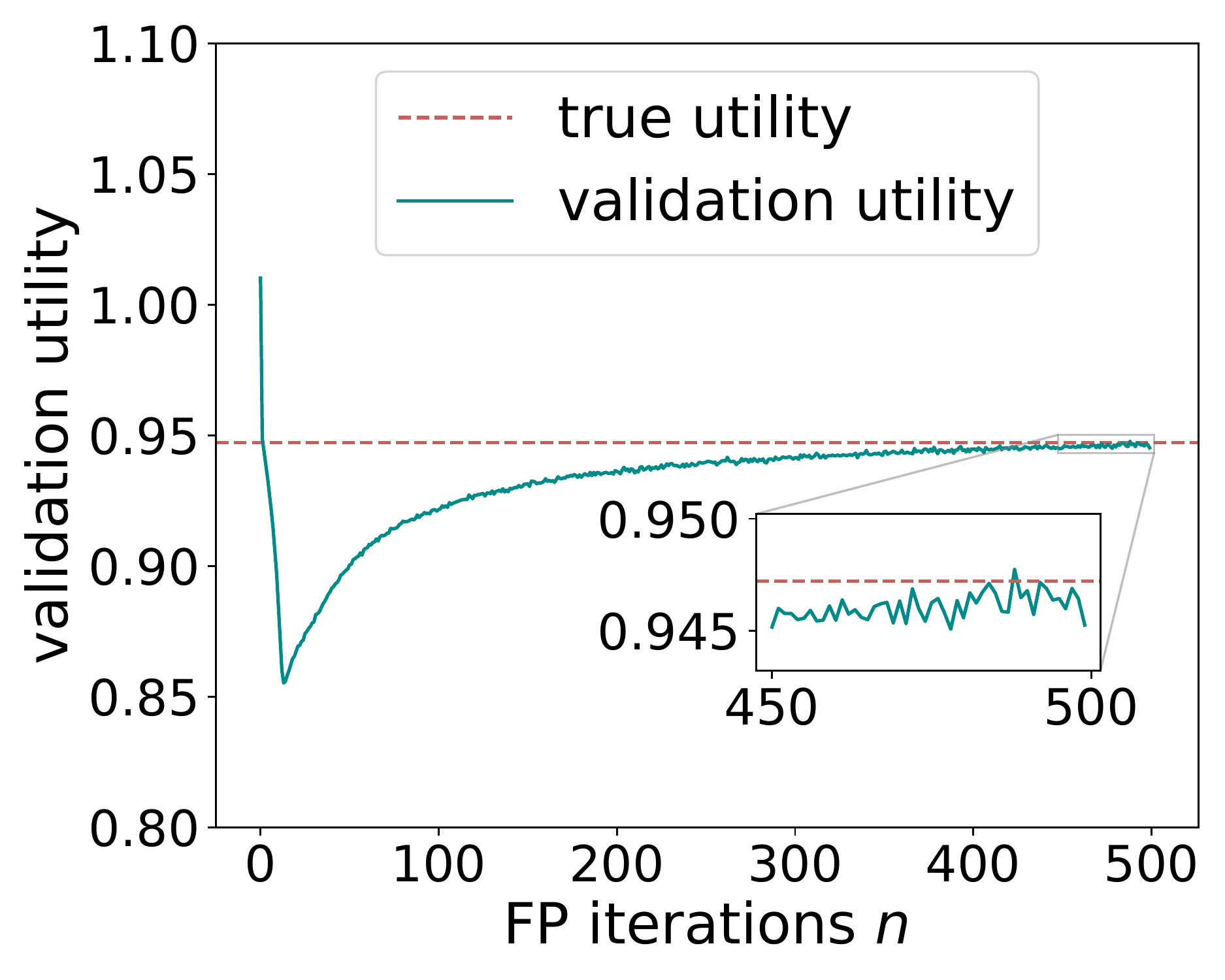}
         
     }
    \caption{Panels (a) and (b) give three trajectories of $X_t$, $m_t= \EE[X_t \vert \mcF_b^B]$ (solid lines) and their approximations (dashed lines) using different $(X_0, W, B)$ from test data. Panel (c) shows the maximized utility computed using validation data over fictitious play iterations.
    Parameter choices are: $\delta \sim U(5, 5.5), \mu\sim U(0.25, 0.35), \nu\sim U(0.2, 0.4), \theta\sim U(0,1), \sigma\sim U(0.2, 0.4)$, $\xi \sim U(0,1)$. }
    \label{fig:Invest}
\end{figure*}


{\bf Mean-Field Game of Optimal Consumption and Investment.}
Our last experiment considers an extended heterogeneous MFG proposed by \citet{lacker2020many}, where agents interact via both states and controls. The setup is similar to \citet{lacker2018mean} except for including consumption and using power utilities. More precisely, each agent is characterized by a type vector $\zeta=(\xi, \delta, \theta, \mu, \nu, \sigma, \epsilon)$, and the optimization problem reads
\begin{equation}
        \sup_{\pi, c} \EE\biggl[ 
    \int_0^T U(c_t X_t(\Gamma_t m_t)^{-\theta}; \delta)\ud t + \epsilon U(X_T m^{-\theta}_T;\delta)
    \biggr],     
 \label{def: InvestConsumpValue}
\end{equation}
where 
$U(x; \delta) = \frac{1}{1-\frac{1}{\delta}}x^{1-\frac{1}{\delta}}$, $\delta\ne 1$, 
$X_t$ follows
\begin{equation}
    \ud X_t = \pi_t X_t(\mu \ud t + \nu \ud W_t + \sigma \ud B_t) - c_tX_t \ud t,  \label{def: InvestConsumpSDE}
\end{equation}
and $X_0 = \xi$. Here $\Gamma_t = \exp\EE[\log c_t |\mcF^B_t]$ and $m_t = \exp\EE[\log X_t |\mcF^B_t]$ are the mean-field interactions from consumption and wealth. 

{\it Training \& Results.} 
For this experiment, we use truncated signatures of depth $M=4$. The optimal controls $(\pi_t, c_t)_{0\le t\le 1}$ are parameterized by two neural networks $\pi_\varphi$ and $c_\varphi$, each with three hidden layers.\footnote{Due to the nature of heterogeneous extended MFG, both $\alpha_\varphi$ and $c_\varphi$ take $(\zeta_t, t, X_t, m_t, \Gamma_t)$ as inputs. Hidden neurons in each layer are (64, 64, 64).} Due to the extended mean-field interaction term $\Gamma_t$, we will propagate two conditional distribution flows, {\it i.e.}, two linear functionals $\Bar{l}^{(n)}, \Bar{l}_c^{(n)}$ during each iteration of fictitious play. Instead of estimating $m_t, \Gamma_t$ directly, we estimate $\EE[\log X_t|\mcF^B_t], \EE[\log c_t|\mcF^B_t]$ by $\langle \Bar{l}^{(n)}, S^4(B_{0:t}) \rangle$, $\langle \Bar{l}_c^{(n)}, S^4(B_{0:t}) \rangle$ and then take exponential to get $m_t, \Gamma_t$. To ensure the non-negativity condition of $X_t$, we evolve $\log X_t$ according to \eqref{def: InvestConsumplogSDE} and then take exponential to get $X_t$. For optimal consumption, $c_\varphi$ is used to predicted $\log c_t$ and thus $\exp c_\varphi$ gives the predicted $c_t$. With 600 iterations of fictitious play and a learning rate of 0.1 decaying by a factor of 5 for every 200 iterations, the relative $L^2$ errors for test data are listed in Table \ref{tab:OCI}. Figure \ref{fig:OCI} compares $X$ and $m$ to their approximations, and plots the maximized utilities. Plots of $\pi_t$, $c_t$, $\Gamma_t=\exp\EE(\log c_t |\mcF^B_t)$ are provided in Appendix~\ref{app:OCI}.


\begin{table}[htb!]
\vspace{-1.2em}
\caption{Relative $L^2$ errors on test data for Optimal Consumption and Investment MFG.}
\label{tab:OCI}
\vskip 0.1in
    \begin{center}
    \begin{small}
    \begin{sc}
    \begin{tabular}{lccccr}
    \toprule
         &  Invest $\pi_t$ & Consumption $c_t$ & $m_t$ & $\Gamma_t$\\
         \midrule
        $L^2_R$ & $0.1126$  & $0.0614$ & $0.0279$  & $0.0121$ \\
        \bottomrule
    \end{tabular}
    \end{sc}
    \end{small}
    \end{center}
    \vspace{-1.5em}
\end{table}


\begin{figure*}[hbt!]
    \centering
    \subfloat[$X_t$]{
         \includegraphics[width=0.6\columnwidth]{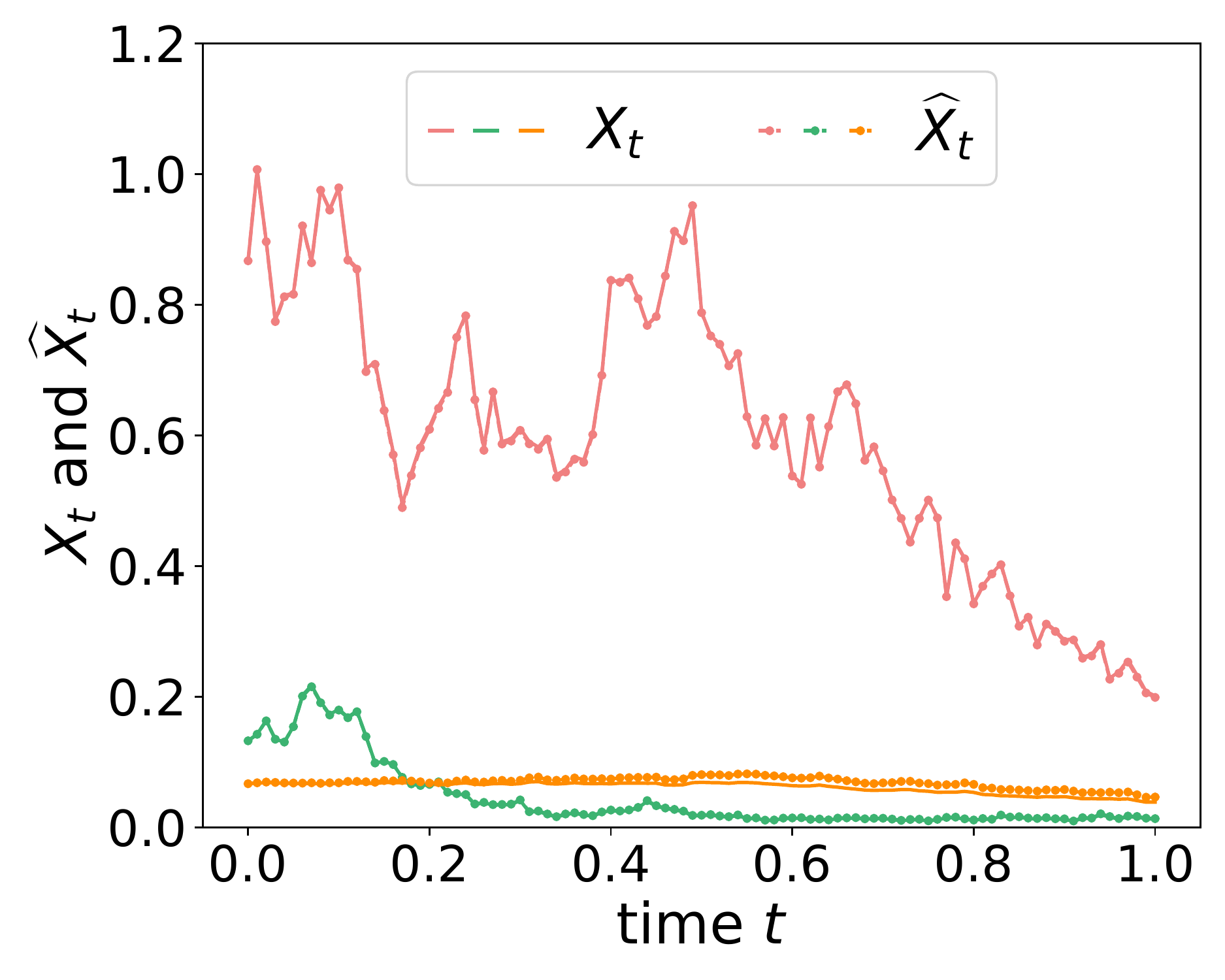}
     }
     \subfloat[$m_t =  \exp\EE(\log X_t |\mcF^B_t)$]{
         \includegraphics[width=0.6\columnwidth]{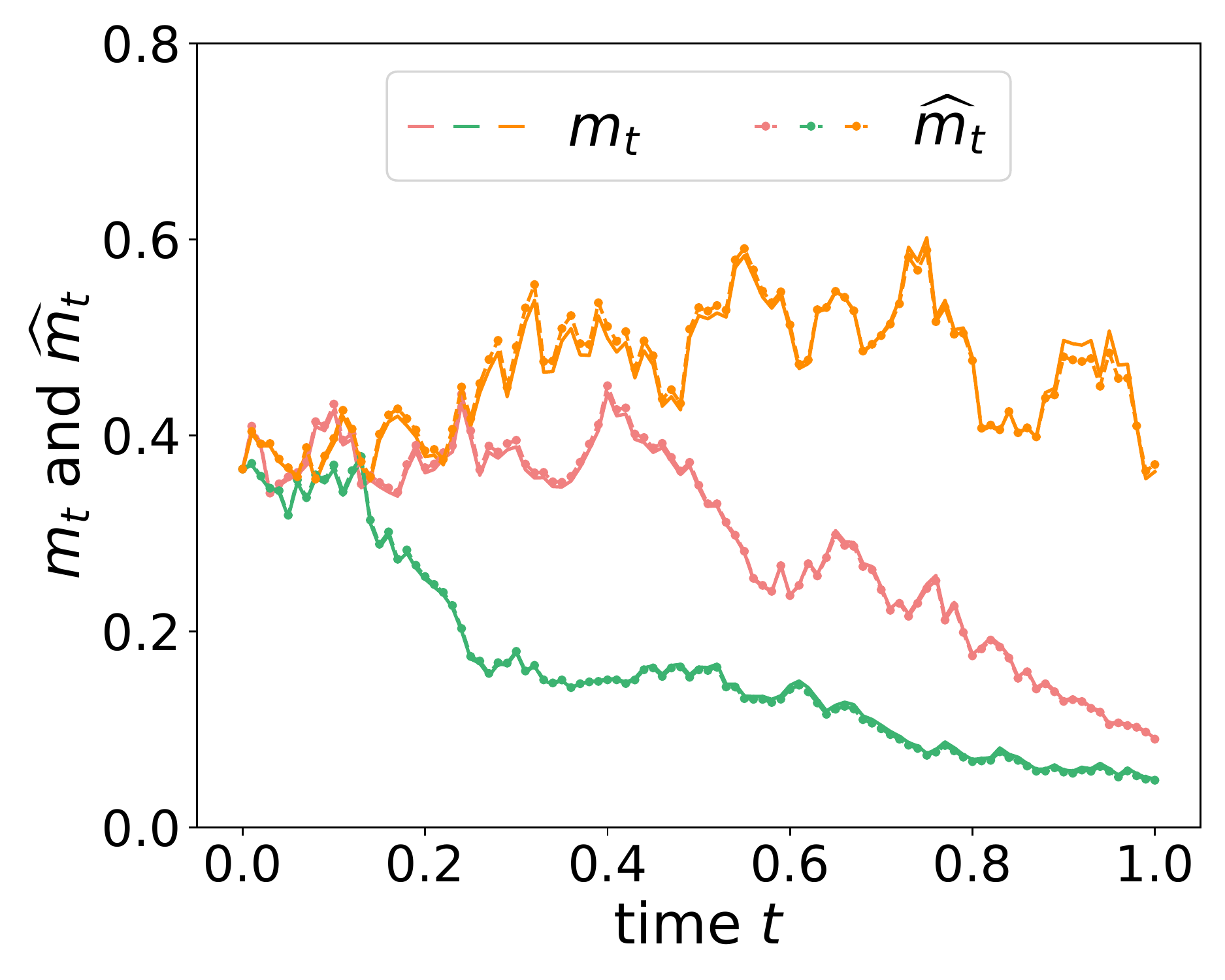}
     }
     \subfloat[Maximized Utility]{
         \includegraphics[width=0.6\columnwidth]{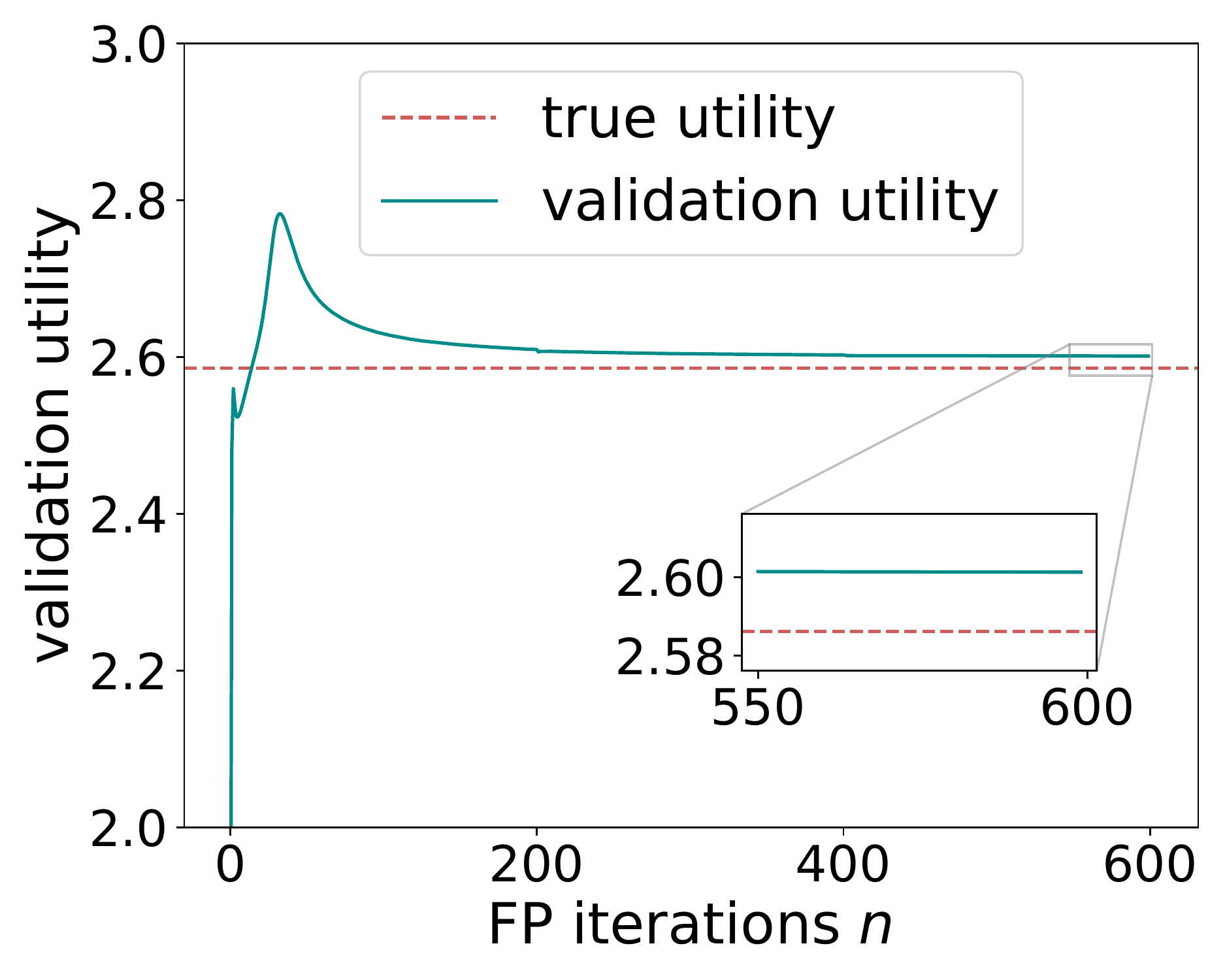}}
    
    \caption{Panels (a) and (b) give three trajectories of $X_t$ and $m_t=\exp\EE(\log X_t |\mcF^B_t)$ (solid lines) and their approximation (dashed lines) using different $(X_0, W, B)$ from test data. Panel (c) shows the maximized utility computed using validation data over fictitious play iterations. Parameter choices are: $\delta \sim U(2, 2.5), \mu\sim U(0.25, 0.35), \nu\sim U(0.2, 0.4), \theta, \xi \sim U(0,1), \sigma\sim U(0.2, 0.4)$, $\epsilon\sim U(0.5, 1)$.}
    \label{fig:OCI}
\end{figure*}


    

{\it Comparison with the nested algorithm.} We run both Sig-DFP and the nested algorithm for the training data size of (INP, CNP)$=(2^4,2^4),\;(2^6,2^6)\;(2^8,2^8)$, where INP means the number of individual noise paths and CNP means the number of common noise paths. From the comparisons of running time, memory, and relative $L^2$ errors in Tables~\ref{tab:RTMem} and \ref{tab:L2err}, one can see that the accuracy is mainly affected by the size of (INP, CNP) used for training the neural network. Sig-DFP has the advantage of reducing memory request and running time, which allows it to use a larger size of data, {\it e.g.}, (INP, CNP)$=(2^{15},2^{15})$, to produce much better accuracy. The quadratic growth of memory in the nested algorithm, evidenced by the first three columns of data in Tables~\ref{tab:RTMem} (least squares growth rate $\approx 2$), makes us unable to run the nested algorithm beyond $(2^8,2^8)$ in our current computing environment due to its high demand for memory.

\begin{table*}[htb!]
\vspace{-1.2em}
\caption{Running time (hours) and Memory (GBs) comparisons between Sig-DFP and the nested algorithm for different (INP, CNP)'s. INP $=$ \# of individual noise paths, CNP $=$ \# of common noise paths, and NA $=$ Not Available due to high demand for memory.}
\label{tab:RTMem}
\vskip 0.1in
    \begin{center}
    \begin{small}
    \begin{sc}
    \begin{tabular}{lcccccr}
    \toprule
         (INP, CNP) &  $(2^4,2^4)$ & $(2^6,2^6)$ & $(2^8,2^8)$ & $(2^{12},2^{12})$ & $(2^{15},2^{15})$\\
         \midrule
        Nested Algorithm & $(0.09, 2.1)$  & $(0.46, 4.1)$ & $(4.3, 43.5)$  & NA & NA \\
        Sig-DFP & $(0.09, 1.9)$  & $(0.1, 2.0)$ & $(0.17, 2.3)$  & $(0.33, 4.8)$ & $(1.3, 27)$ \\
        \bottomrule
    \end{tabular}
    \end{sc}
    \end{small}
    \end{center}
    \vspace{-1.5em}
\end{table*}

\begin{table*}[htb!]
\vspace{-0em}
\caption{The comparisons of relative $L^2$ errors on $(\pi, c)$ between Sig-DFP and the nested algorithm for different (INP, CNP)'s. INP $=$ \# of individual noise paths, CNP $=$ \# of common noise paths, and NA $=$ Not Available due to high demand for memory.}
\label{tab:L2err}
\vskip 0.1in
    \begin{center}
    \begin{small}
    \begin{sc}
    \begin{tabular}{lcccccr}
    \toprule
         (INP, CNP) &  $(2^4,2^4)$ & $(2^6,2^6)$ & $(2^8,2^8)$ & $(2^{12},2^{12})$ & $(2^{15},2^{15})$\\
         \midrule
        Nested Algorithm & $(53\%, 44\%)$  & $(36\%, 41\%)$ & $(79.4\%, 16.2\%)$  & NA & NA \\
        Sig-DFP & $ (85.8\%, 48.1\%)$  & $(43.3\%, 44.9\%)$ & $(49\%, 43\%)$  & $(18\%, 38\%)$ & $(11\%, 6\%)$ \\
        \bottomrule
    \end{tabular}
    \end{sc}
    \end{small}
    \end{center}
    \vspace{-1.5em}
\end{table*}

\begin{table*}[htb!]
\vspace{-0em}
\caption{The comparisons of running time (hours) for different signature depth $M$ and dimension $n_0$ using (INP, CNP)$\small =(2^{15},2^{15})$ .}
\label{tab:depth}
\vskip 0.1in
    \begin{center}
    \begin{small}
    \begin{sc}
    \begin{tabular}{lccccccccr}
    \toprule
          ($n_0$, Depth $M$) &  $(1,1)$ &  $(1,2)$ & $(1,3)$ & $(1,4)$ & $(5,1)$ & $(5,2)$ & $(5,3)$ &  $(5,4)$ \\
         \midrule
        Running Time (hours) & $1.2$ & $1.2$  & $1.2$ & $1.3$ & $1.2$ & $1.3$ & $1.5$ & $2.6$ \\
        \bottomrule
    \end{tabular}
    \end{sc}
    \end{small}
    \end{center}
    \vspace{-1.5em}
\end{table*}

{\it Comparisons of running time for different signature depth $M$ and dimension $n_0$.} We choose the data size (INP, CNP)$=(2^{15},2^{15})$ and compare the running time for different $(n_0, M)$'s in Table~\ref{tab:depth}. Choosing $M=1, 2, 3, 4$ yield the relative $L^2$ errors of controls $(\pi, c)$ as $(15.9\%, 9.5\%)$, $(11.4\%, 6.3\%)$, $(11.4\%, 6.3\%)$ and $(11.3\%, 6.1\%)$ for $n_0=1$, respectively. Note that, compared to $M=1$, taking $M=2$ improves the accuracy significantly but not $M=3,4$. This is because the curves of $\log(c_t)$ and $\log(X_t^*)$ are approximately either linear or quadratic in $t$, as shown in Figure~\ref{fig:OCIcurves} in Appendix~\ref{app:OCI} after taking a logarithm, which implies that the signatures of depth $M=2$ will be sufficient to produce good accuracy. We remark that Sig-DFP has no difficulty computing high-dimensional problems, evidenced by the running time of $n_0=5$ cases in Table~\ref{tab:depth}. We focus on one-dimensional problems since, to our best knowledge, the closed-form non-trivial solutions only exist in one-dimensional cases, which can serve as the benchmark solutions. More details about $n_0=5$ are given in Appendix~\ref{app:highd}.

\section{Conclusion}

In this paper, we propose a novel single-loop algorithm, named signatured deep fictitious play, for solving mean-field games (MFGs) with common noise. We incorporate signature from rough path theory into the strategy of deep fictitious play \cite{Hu2:19,HaHu:19,han2020convergence}, and avoid the nested-loop structure in existing machine learning methods, which reduces the computational cost significantly. Analysis of the complexity and convergence for the proposed algorithm is provided. The effectiveness of the algorithm is justified by three applications, and in particular, we report the first deep learning work to deal with extended MFGs with common noise. In the future, we shall study deep learning algorithms for MFGs with common noise in more general settings \cite{HuZa:21}.

\section*{Acknowledgement}
R.H.~was partially supported by NSF grant DMS-1953035, the Faculty Career Development Award and the Research Assistance Program Award, University of California, Santa Barbara. M.M. and R.H. are grateful to the reviewers for their valuable and constructive comments.

\newpage
\bibliography{citations}
\bibliographystyle{icml2021}

\newpage
\quad 
\newpage
\appendix

\begin{onecolumn}
\section{Preliminaries on Rough Path Theory and Signatures}\label{app:signature}


In this appendix, we shall follow \citet{lyons2002system, LyonsTerryJ2007DEDb,friz_victoir_2010} and briefly introduce rough path theory and signatures. We will also give the proof of Lemma~\ref{lemma:propbysig} using the factorial decay property of signatures.
Denote by $\Delta_T$ the simplex $\{(s,t)\in[0,T]^2: 0\le s\le t\le T\}$, and by $T^n(\RR^d)=\bigoplus_{k=0}^n (\RR^d)^{\bigotimes k}$ the truncated tensor algebra. 


\begin{defn}[Multiplicative Functional]
Let $\bX: \Delta_T \to T^{n}(\RR^d)$, with $n\geq 1$ as an integer. For each $(s,t)\in \Delta_T$, $\bX_{s,t}$  denotes the image of $(s,t)$ under the mapping $\bX$, and we write $$\bX_{s,t} = (\bX_{s,t}^0, \bX_{s,t}^1, \dots, \bX_{s,t}^n)\in T^n(\RR^d).$$
The function $\bX$ is called a multiplicative functional of degree $n$ in $\RR^d$ if $\bX_{s,t}^0=1$ for all $(s,t)\in \Delta_T$ and
	\begin{equation}
	\label{chens_identity}
	\bX_{s,u} \otimes \bX_{u,t} = \bX_{s,t}, \,\, \forall s,u,t\in [0,T], \,\, s\le u\le t,
	\end{equation}
	which is called Chen's identity.
\end{defn}
Rough paths will be defined as a multiplicative functional with extra regularization conditions.

\begin{defn}[Control]
A control function on $[0,T]$ is a continuous non-negative function $\omega$ on the simplex $\Delta_T$ which is supper-additive in the sense that
$$\omega(s,u) + \omega(u,t) \le \omega(s,t) \,\,\,\, \forall s\le u\le t \in [0,T].$$
\end{defn}

It is easy to see that $\omega(t,t)=0$ for any control $\omega$. In the following, we use the notation $x! = \Gamma(x+1)$, where $\Gamma(\cdot)$ is the Gamma function and x is a positive real number.

\begin{defn}
Let $p\ge 1$ be a real number and $n\ge 1$ be an integer. Denote $\omega: \Delta_T \to [0, +\infty)$ as a control and $\bX:\Delta_T\to T^n(\RR^d)$ as a multiplicative functional. Then we say that $\bX$ has finite $p$-variation on $\Delta_T$ controlled by $\omega$ if
\begin{equation}
\label{control_eq}
	\| \bX^i_{s,t} \| \le \frac{ \omega(s,t)^{\frac{i}{p}} }{\beta (\frac{i}{p})!} \,\,\,\, \forall i=1,\dots,n, \,\,\,\, \forall(s,t)\in\Delta_T,
\end{equation}
where $\|\cdot\|$ is the tensor norm induced by the norm on $\RR^d$.
We will call that $\bX$ has finite $p$-variation in short if there exists a control $\omega$ such that \eqref{control_eq} is satisfied.
\end{defn}

Note that in \eqref{control_eq}, $\beta$ is a constant depending only on $p$. We are now ready to define the rough paths.

\begin{defn}[Rough Path]
\label{defn_roughpath}
Let $p\ge 1$ be a real number. A $p$-rough path in $\RR^d$ is a multiplicative functional of degree $\lfloor p \rfloor$ with finite $p$-variation. The space of $p$-rough paths is denoted by $\Omega_p(\RR^d)$.
\end{defn}
Given a continuous path $X:[0,T]\to \RR^d$ with bounded $p$-variation, one can construct a $\lfloor p \rfloor$-rough path $\bX$ with $\bX^1_{s,t} = X_t - X_s$ for any  $s\le t$. In particular, truncated siganture $S^{\lfloor p \rfloor}(X)\in T^{\lfloor p \rfloor}(\RR^d)$ is a $p$-rough path. The following fundamental theorem of rough paths allows us to make extension of a $p$-rough path, 

\begin{thm}[Extension Theorem, \citet{lyons2002system}] 
\label{extension_thm} 
Let $p\ge 1$ be a real number and $n\ge 1$ an integer. Denote $\bX: \Delta_T \to T^n(\RR^d)$ as a multiplicative functional with finite $p$-variation controlled be a control $\omega$. Assume that $n\ge\lfloor p \rfloor$, then there exists a unique extension of $\bX$ to a multiplicative functional $\Delta_T\to T((\RR^d))$ which possesses finite $p$-variation.

More precisely, for every $m\ge \lfloor p\rfloor + 1$, there exists a unique continuous function $\bX^m:\Delta_T\to (\RR^d)^{\bigotimes m}$ such that
$$(s,t) \to \bX_{s,t}=\left( 1, \bX^1_{s,t}, \dots, \bX^{\lfloor p\rfloor}_{s,t}, \dots, \bX^m_{s,t}, \dots \right) \in T((\RR^d))$$
is a multiplicative functional with finite $p$-variation controlled by $\omega$. By this we mean that
\begin{equation}
\label{extension_eq}
	\|\bX^i_{s,t}\| \le \frac{\omega(s,t)^{\frac{i}{p}}}{\beta (\frac{i}{p})!} \,\,\,\, \forall i\ge 1, \,\,\,\, \forall (s,t)\in \Delta_T.
\end{equation}
\end{thm}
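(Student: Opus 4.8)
The plan is to prove existence and uniqueness \emph{degree by degree}, reducing everything to a single inductive step: given a multiplicative functional $\bX$ defined and controlled up to some degree $k \ge \lfloor p\rfloor$, I produce a unique degree-$(k+1)$ term $\bX^{k+1}$ that keeps the extended object multiplicative and satisfies the bound \eqref{control_eq} at level $k+1$. Since $k+1 \ge \lfloor p\rfloor + 1 > p$, the exponent $\theta := (k+1)/p$ is \emph{strictly} greater than $1$ at every such level, and this super-linearity is what drives both halves of the argument. Iterating the single step from $k=\lfloor p\rfloor$ upward then yields the full extension to $T((\RR^d))$.

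For uniqueness at level $k+1$, I would take two candidate degree-$(k+1)$ multiplicative functionals that agree with $\bX$ up to degree $k$ and examine the difference $M_{s,t}$ of their top-level terms. Expanding Chen's identity \eqref{chens_identity} at level $k+1$ gives $\bX^{k+1}_{s,t} = \bX^{k+1}_{s,u} + \bX^{k+1}_{u,t} + \sum_{l=1}^{k}\bX^{l}_{s,u}\otimes\bX^{k+1-l}_{u,t}$; since the lower levels coincide for both candidates, the cross-term sum cancels upon subtraction, so $M$ is genuinely \emph{additive}: $M_{s,t}=M_{s,u}+M_{u,t}$. Both candidates obey $\|M_{s,t}\|\le C\,\omega(s,t)^{\theta}$ with $\theta>1$, so for any partition $s=u_0<\cdots<u_r=t$ I can estimate $\|M_{s,t}\|\le \sum_i \|M_{u_i,u_{i+1}}\| \le C\big(\max_i \omega(u_i,u_{i+1})\big)^{\theta-1}\sum_i \omega(u_i,u_{i+1}) \le C\big(\max_i \omega(u_i,u_{i+1})\big)^{\theta-1}\omega(s,t)$, using super-additivity of $\omega$. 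Refining the partition sends the right-hand side to $0$ by continuity of $\omega$ and $\omega(t,t)=0$, so $M\equiv 0$.

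Existence is the hard part. I would construct $\bX^{k+1}_{s,t}$ as a limit of Riemann-type products over partitions: append a zero level-$(k+1)$ component to $\bX$, and for a partition $D$ of $[s,t]$ set $\bX^{k+1,D}_{s,t}$ to be the degree-$(k+1)$ component of $\bigotimes_i \bar{\bX}_{u_i,u_{i+1}}$. The central estimate is an almost-additivity (dropped-point) bound: removing one interior point $u_j$ changes $\bX^{k+1,D}$ by exactly $\sum_{l=1}^{k}\bX^{l}_{u_{j-1},u_j}\otimes\bX^{k+1-l}_{u_j,u_{j+1}}$, each factor controlled by \eqref{control_eq}. Here the classical \emph{neoclassical inequality} $\sum_{l}\frac{x^{l/p}y^{(k+1-l)/p}}{(l/p)!\,((k+1-l)/p)!}\le p^2\frac{(x+y)^{(k+1)/p}}{((k+1)/p)!}$ is precisely what collapses the sum into a single term of order $\omega(u_{j-1},u_{j+1})^{(k+1)/p}/((k+1)/p)!$ with a controlled constant. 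Summing the dropped-point estimates as the mesh refines, and again exploiting super-additivity of $\omega$ together with $\theta>1$, shows the products form a Cauchy net, so $\bX^{k+1}_{s,t}$ exists and the same bookkeeping transfers the factorial bound to the limit, giving \eqref{extension_eq}. Multiplicativity of the limit follows by choosing partitions of $[s,t]$ that contain an intermediate point $u$: the product then factorizes across $u$, and passing to the limit recovers Chen's identity at level $k+1$.

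The main obstacle, and where the real work lies, is calibrating the constant $\beta$ so that the neoclassical inequality reproduces \emph{exactly} the bound \eqref{control_eq} at each new level, rather than a constant that degrades as $k$ grows. Securing a factorial decay that is uniform in $k$ is the delicate point that makes the extension to all of $T((\RR^d))$, and not merely to finitely many levels, go through.
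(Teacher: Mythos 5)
The paper itself offers no proof of this statement---it is quoted directly from \citet{lyons2002system}---so the only meaningful comparison is with the classical argument in that reference, and your proposal reconstructs it faithfully: level-by-level induction starting at $k=\lfloor p\rfloor$, uniqueness from the additivity of the difference of two candidate top-level terms (the Chen cross-terms cancel) combined with super-additivity of $\omega$ and the exponent $\theta=(k+1)/p>1$, and existence from partition products whose dropped-point errors are collapsed by the neoclassical inequality. The one step you flag but leave open---calibrating $\beta$ uniformly over all levels---is exactly where Lyons' proof inserts the pigeonhole choice of the deleted point, namely always removing a $u_j$ with $\omega(u_{j-1},u_{j+1})\le \tfrac{2}{r-2}\,\omega(s,t)$ (guaranteed by super-additivity), and then demanding $\beta \ge p^2\bigl(1+\sum_{r\ge 3}(2/(r-2))^{(\lfloor p\rfloor+1)/p}\bigr)$; this series converges precisely because $(\lfloor p\rfloor+1)/p>1$, and the single resulting constant works at every level since the exponent $(k+1)/p$ only increases with $k$.
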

Signature can be seen as an extension of rough path, and its factorial decay property follows by \eqref{extension_eq}. The control function is related to $p$-variation of path. Given that $x\in \mathcal{V}^p([0,T], \RR^d)$, $S^{\lfloor p \rfloor}(x)$ is a $p$-rough path and one candidate for its control function is
\begin{equation}
\label{eq:omega}
    \omega(s,t) = \sum_{i=1}^{\lfloor p \rfloor} \sup_{D\subset [s,t]}\sum_{k} \| x^i_{t_{k+1}} -x^i_{t_{k}}\|^{p/i},
\end{equation}
where the norm is the tensor norm induced by Euclidean norm in $\RR^d$.

Let $S^{\lfloor p \rfloor}(\Omega_1) = \{S^{\lfloor p \rfloor}(x): x\in\Omega_1(\RR^d)\}$, and $\bY$ be a $p$-rough path. We call $\bY$ a $p$-geometric rough path if $\bY$ is in the closure of $S^{\lfloor p \rfloor}(\Omega_1)$ under $p$-variation metric, where $p$-variation metric is given by
\begin{equation}
    d_{p\text{-var}}(\bX,\bY) := \left(\sup_{D}\sum_{t_i\in D} \|\bX_{t_i,t_{i+1}}-\bY_{t_i,t_{i+1}}\|^p\right)^{1/p}, \quad\quad \bX,\bY\in \Omega_p(\RR^d).
\end{equation}

\begin{proof}[Proof of Lemma \ref{lemma:propbysig}]
    By constructing the iterated integral in Stratonovich sense, $S(\hat{B}_{0:T})$ is the signature of a $p$-geometric rough path $\forall p\in(2,3)$ \cite{friz_victoir_2010}, and thus it characterizes $B_{0:T}$ uniquely. Therefore, conditional distribution $\mu_t = \EE[\iota(X_t) \vert \mcF_t^B]$ can be written as $\mu_t := \mu(t, B_{0,t})=\mu(\hat{B}_{0,t})$.
    
    By Theorem \ref{thm:sig_universality}, for any $\eps >0$ there exits $l$ such that
    \begin{equation}
        \sup_{\hat{B}\in K} |\mu(\hat{B}_{0:T}) - \langle l, S(\hat{B}_{0:T}) \rangle| <\frac{\epsilon}{2}.
    \end{equation}
    Since $|\langle l, S(\hat{B}_{0:T})- S^M(\hat{B}_{0:T})\rangle | \le \|l\| \cdot \|S(\hat{B}_{0:T})- S^M(\hat{B}_{0:T})\|$ where the first norm is functional norm and second is tensor norm and $\|S(\hat{B}_{0:T})- S^M(\hat{B}_{0:T})\| = \sum_{i\ge M+1} \|\hat{B}_{0:T}^{i}\|$. By the compactness of $K$, and \eqref{extension_eq}, \eqref{eq:omega}, 
    $\sum_{i\ge M+1} \|\hat{B}_{0:T}^{i}\|$ admits a convergent uniform norm over $\hat{B}\in K$ and goes to $0$ as $M\to\infty$. Then for $M$ large enough,
    \begin{equation}
        \sup_{\hat{B}\in K} |\mu(\hat{B}_{0:T}) - \langle l, S^M(\hat{B}_{0:T}) \rangle|         < \frac{\epsilon}{2} + \sup_{\hat{B}\in K}|\langle l, S(\hat{B}_{0:T})- S^M(\hat{B}_{0:T})\rangle | 
        <\frac{\epsilon}{2} + \frac{\epsilon}{2} = \epsilon.
        \label{eq:universalatT}
    \end{equation}
    For $t<T$, we extend path $\hat{B}_{0:t}$ to space $\mathcal{V}^p([0,T], \RR^d)$ by defining
    $$\Tilde{B}^t_s := \left\{\begin{array}{cc}
     \hat{B}_s,    & 0\le s\le t  \\
      \hat{B}_t,   & t < s \le T.
    \end{array} \right.$$
    Then $\Tilde{B}^t_{0:T}\in \mathcal{V}^p([0,T], \RR^d)$, $S(\Tilde{B}^t_{0:T}) = S(\hat{B}_{0:t})$ by Chen's identity \eqref{chens_identity}, and $\mu(\hat{B}_{0:t}) = \mu(\Tilde{B}^t_{0,T})$. Denote $\tilde{K} = \{\Tilde{B}^t_{0:T}, \forall t\in[0,T]: \tilde B_{0:T}^t \text{ is constructed by } \hat B_{0:t} \text{ and } \hat B\in K\}$. Thus $\tilde{K}$ is also compact.
    \begin{align}
        \sup_{t\in[0,T]}\sup_{\hat{B}\in K}  |\mu(\hat{B}_{0:t}) - \langle l, S^M(\hat{B}_{0:t}) \rangle| &=  \sup_{t\in[0,T]}\sup_{\hat{B}\in K}  |\mu(\Tilde{B}^t_{0:T}) - \langle l, S^M(\Tilde{B}^t_{0:T}) \rangle| \nonumber\\
        &= \sup_{\Tilde{B}\in \tilde{K}}  |\mu(\Tilde{B}_{0:T}) - \langle l, S^M(\Tilde{B}_{0:T}) \rangle| < \epsilon,
    \end{align}
    where the second equality is due to the construction of $\tilde B_{0:T}^t$ and the last inequality is by \eqref{eq:universalatT}.
\end{proof}


\section{Details of Implementing the Sig-DFP Algorithm}\label{app:SigDFP}

The simulation of $X^{i, (n)}$ and $J_B(\varphi, \hat{\mu}^{(n-1)})$ follows
\begin{align}
    & J_B(\varphi, \hat{\mu}^{(n-1)}) = \frac{1}{B}\sum_{i=1}^B \left(\sum_{k=0}^{L-1}f(t_k, X_k^{i, (n)}, \hat{\mu}^{(n-1)}_k(\omega^i), \alpha_\varphi(t_k, X^{i, (n)}_{k}, \hat{\mu}^{(n-1)}_k(\omega^i)) \Delta_k + g(X_L, \hat{\mu}^{(n-1)}_L(\omega^i)) \right), \label{def:J_algo} \\
    & X^{i, (n)}_{k+1} = X^{i, (n)}_{k} + b(t_k, X^{i,(n)}_k, \hat{\mu}^{(n-1)}_k(\omega^i), \alpha_\varphi(t_k, X^{i, (n)}_{k}, \hat{\mu}^{(n-1)}_k(\omega^i)) \Delta_k \nonumber\\
    & \quad\quad\quad\quad\quad\quad\,\,\, + \sigma(t_k, X^{i, (n)}_{k}, \hat{\mu}^{(n-1)}_k(\omega^i), \alpha_\varphi(t_k, X^{i, (n)}_{k}, \hat{\mu}^{(n-1)}_k(\omega^i)) \Delta W^i_k \nonumber \\
    & \quad\quad\quad\quad\quad\quad\,\,\, + \sigma^0(t_k, X^{i, (n)}_{k}, \hat{\mu}^{(n-1)}_k(\omega^i), \alpha_\varphi(t_k, X^{i, (n)}_{k}, \hat{\mu}^{(n-1)}_k(\omega^i)) \Delta B^i_k, \quad X^{i, (n)}_0 = X^i_0 \sim \mu_0, \label{def:Xt_algo}
\end{align}
where $\hat{\mu}^{(n-1)}_k(\omega^i)$ is computed by  $\hat{\mu}^{(n-1)}_k(\omega^i) = \langle \Bar{l}^{(n-1)}, S^M(\hat{B}^i_{0:t_k}) \rangle$ with $\Bar{l}^{(n-1)}$ obtained from the previous round of fictitious play. Then $l^{(n)}$ is calculated by regressing $\{\iota(X^{i, (n)}_0), \iota(X^{i, (n)}_{L/2}), \iota(X^{i, (n)}_L)\}_{i=1}^N$ on $\{S^M(\hat{B}_{0,0}), S^M(\hat{B}_{0,t_{L/2}}), S^M(\hat{B}_{0,t_L})\}_{i=1}^N$, and we update $\bar{l}^{(n)} = \frac{n-1}{n}\bar{l}^{(n-1)} +\frac{1}{n}l^{(n)}$ for $n\ge 1$. The algorithm starts with a random initialization $\bar l^{(0)}$ to produce  $\hat{\mu}^{(0)}$. 

{\bf Linear-Quadratic MFGs.} We set $\alpha_\varphi$ to be a feed-forward NN with two hidden layers of width 64. The signature depth is chosen at $M=2$. This model is trained for $N_{round}=500$ iterations of fictitious play. Note that fictitious play has a slow convergence speed since our initial guess $m^{(0)}$ is far from the truth. Therefore, we only apply averaging over distributions (or linear functions) during the second half iteration. We set the learning rate as $0.1$ for the first half iterations and $0.01$ for the second half. The minibatch size is $B=2^{10}$, and hence $N_{batch}=2^5$.

{\bf Mean-field Portfolio Game.} We consider signature depth $M=2$ and use a fully connected neural network $\pi_\varphi$ with four hidden layers to estimate $\pi_t$. Since different players are characterized by their type vectors $\zeta$, $\pi_\varphi$ takes $(\zeta, t, X_t, m_t)$ as inputs. Hidden neurons in each layer are (64, 32, 32, 16). We train our model with $N_{round}=500$ rounds fictitious play. The learning rate starts at $0.1$ and is reduced by a factor of $5$ after every 200 rounds. The minibatch size is $B=2^{10}$, and hence $N_{batch}=2^5$.

{\bf Mean-field Game of Optimal Consumption and Investment.} In this example, signature depth is $M=4$. The optimal controls $(\pi_t, c_t)_{0\le t\le 1}$ are estimated by two neural networks $\pi_\varphi$ and $c_\varphi$, each with three hidden layers. Due the nature of heterogeneous extended MFG, both $\alpha_\varphi$ and $c_\varphi$ take $(\zeta_t, t, X_t, m_t, \Gamma_t)$ as the inputs. Hidden layers in each network have width (64, 64, 64). We will propagate two conditional distribution flows, {\it i.e.}, two linear functionals $\Bar{l}^{(n)}, \Bar{l}_c^{(n)}$ during each round fictitious play. Instead of estimating $m_t, \Gamma_t$ directly, we estimate $\EE[\log X_t^\ast |\mcF^B_t], \EE[\log c^*_t|\mcF^B_t]$ by $\langle \Bar{l}^{(n)}, S^4(\hat B_{0:t}) \rangle$, $\langle \Bar{l}_c^{(n)}, S^4(\hat B_{0:t}) \rangle$, and then take the exponential to get $m_t, \Gamma_t$. To ensure the non-negativity condition, we evolve $\log X_t$ according to \eqref{def: InvestConsumplogSDE}, use $c_\varphi$ to predicted $\log c_t$, and then take exponential to get $c_t, X_t$. We use $N_{round}=600$ rounds fictitious play training, learning rate 0.1 decaying by a factor of 5 for every 200 rounds, the minibatch size $B=2^{11}$, and hence $N_{batch}=2^4$. 

The training time for all three experiments with sample size $N=2^{13}, 2^{14}, 2^{15}$ is given in Table \ref{tab:training time}.

\vspace{-1em}
\begin{table}[ht]
\caption{Training time in minutes. Here LQ-MFG $=$ Linear-Quadratic mean-field games, MF Portfolio $=$ Mean-field Portfolio Game, and MFG with Consump. $=$  Mean-field Game of Optimal Consumption and Investment. }
\label{tab:training time}
\vskip 0.15in
    \begin{center}
    \begin{small}
    \begin{sc}
    \begin{tabular}{lcccr}
    \toprule
         & $N=2^{13}$ & $N=2^{14}$ & $N=2^{15}$\\
         \midrule
        LQ-MFG & $12.4$  & $23.7$ & $46.7$ \\
        MF Portfolio & $12.3$  & $23.3$ & $45.5$ \\
        MFG with Consump. & $23.4$  & $40.9$ & $80.1$ \\
        \bottomrule
    \end{tabular}
    \end{sc}
    \end{small}
    \end{center}
\end{table}

\section{Proof of Theorems~\ref{thm:cvg} and \ref{thm:numcvg}}\label{app:algorithm}
\newcommand{\pd}{\partial}
We first list all main assumptions on $(b, \sigma, \sigma^0, f, g)$ that will be used to prove Theorem~\ref{thm:cvg}. Let $\|\cdot\|$ be the Euclidean norm and $K$ be the same constant for all assumptions below.
\begin{assump}\label{assump:cvg} We make assumptions {\bf A1-A3} and {\bf B1-B3} as follows.
\vspace{-1em}
\begin{itemize}
    \item[\bf A1.]
    (Lipschitz)
    $\pd_x f, \pd_\alpha f, \pd_x g$ exist and are $K$-Lipschitz continuous in $(x, \alpha)$ uniformly in $(t, \mu)$, {\it i.e.}, for any $t\in [0,T]$, $x, x^\prime\in\RR^d, \alpha, \alpha^\prime\in\RR^m, \mu\in \mcP^2(\RR^d)$,
    \begin{align*}
       \|\pd_x g(x, \mu)-\pd_x g(x^\prime, \mu)\|&\le K\|x-x^\prime\|,\nonumber \\
         \|\pd_x f(t,x,\mu,\alpha)-\pd_x f(t,x^\prime, \mu, \alpha^\prime)\|& \le K(\|x-x^\prime\|+\|\alpha - \alpha^\prime\|), \\
         \|\pd_\alpha f(t,x,\mu,\alpha)-\pd_\alpha f(t,x^\prime, \mu, \alpha^\prime)\| &\le K(\|x-x^\prime\|+\|\alpha - \alpha^\prime\|). \nonumber
    \end{align*}
The drift coefficient $b(t, x, \mu, \alpha)$ in \eqref{def:Xt} takes the form 
\begin{equation*}
    b(t, x, \mu, \alpha) = b_0(t, \mu) + b_1(t) x + b_2(t)\alpha,
\end{equation*}
where $b_0 \in \RR^d$, $b_1 \in \RR^{d \times d}$ and $b_2 \in \RR^{d \times m}$ are measurable functions and bounded by $K$. The diffusion coefficients $\sigma(t, x, \mu)$ and $\sigma^0(t, x, \mu)$ are uncontrolled and $K$-Lipschitz in $x$ uniformly in $(t,\mu)$:
\begin{equation*}
    \|\sigma(t,x, \mu)\| \leq K\|x - x'\|, \quad \|\sigma^0(t,x, \mu)\| \leq K\|x - x'\|.
\end{equation*}

\item[\bf A2.]
(Growth)
$\pd_x f, \pd_\alpha f, \pd_x g$ satisfy a linear growth condition, {\it i.e.}, for any $t\in[0,T]$, $x\in\RR^d, \alpha\in\RR^m, \mu\in\mcP^2(\RR^d)$,
    \begin{align*}
        & \quad\quad \|\pd_xg(x, \mu)\|\le K\bigg( 1 + \|x\| + \left(\int_{\RR^d} \|y\|^2 \ud \mu(y)\right)^{\frac{1}{2}} \bigg),\nonumber \\
        & \|\pd_x f(t,x,\mu,\alpha)\| \le K\bigg( 1 + \|x\|  + \|\alpha\|+ \left(\int_{\RR^d}\|y\|^2 \ud \mu(y)\right)^{\frac{1}{2}} \bigg), \\
        & \|\pd_\alpha f(t,x,\mu,\alpha)\| \le K\bigg( 1 + \|x\| + \|\alpha\| + \left(\int_{\RR^d}\|y\|^2 \ud \mu(y)\right)^{\frac{1}{2}} \bigg). \nonumber
    \end{align*}
    In addition $f, g$ satisfy a quadratic growth condition in $\mu$:
    \begin{align*}
        & \quad |g(0, \mu)| \le K\bigg(1+\int_{\RR^d}\|y\|^2 \ud \mu(y)\bigg), \nonumber \\
        &|f(t, 0, \mu, 0)| \le K\bigg(1+\int_{\RR^d} \|y\|^2 \ud \mu(y)\bigg).
    \end{align*}

\item[\bf A3.] (Convexity)
    $g$ is convex in $x$ and $f$ is convex jointly in $(x, \alpha)$ with strict convexity in $\alpha$, {\it i.e.}, for any $x, x^\prime\in\RR^d, \mu\in\mcP^2(\RR^d)$,
    \begin{equation*}(\pd_x g(x,\mu)-\pd_x g(x^\prime, \mu))^T(x-x^\prime)\ge 0,\end{equation*} 
    and there exist a constant $c_f>0$ such that for any $t\in[0,T]$, $x,x^\prime\in\RR^d,\alpha,\alpha^\prime\in\RR^m$, $\mu\in\mcP^2(\RR^d)$,
    \begin{equation*}
        f(t,x^\prime, \alpha^\prime, \mu)\ge f(t,x,\alpha,\mu) + \pd_x f(t,x,\alpha, \mu)^T(x^\prime-x)+\pd_\alpha f(t,x,\alpha, \mu)^T(\alpha^\prime-\alpha)+c_f\|\alpha^\prime-\alpha\|^2.
    \end{equation*}

\item[\bf B1.] (Lipschitz in $\mu$)
    $\pd_x g, \pd_x f, \pd_\alpha f, b_0, \sigma, \sigma^0$ are Lipschitz continuous in $\mu$ uniformly in $(t,x)$, {\it i.e.}, there exists a constant $K$ such that
    \begin{align*}
        \|\pd_xg(x,\mu)-\pd_xg(x,\mu^\prime)\|&\le K \mc{W}_2(\mu,\mu^\prime), \nonumber\\
         \|\pd_xf(t,x,\mu,\alpha)-\pd_xf(t,x,\mu^\prime,\alpha)\|&\le K \mc{W}_2(\mu,\mu^\prime) \nonumber \\
          \|\pd_\alpha f(t,x,\mu,\alpha)-\pd_\alpha f(t,x,\mu^\prime,\alpha)\|&\le K \mc{W}_2(\mu,\mu^\prime)\\
          \|b_0(t, \mu) - b_0(t, \mu')\| &\leq K \mc{W}_2(\mu, \mu'),\\
             \|\sigma(t, x, \mu) - \sigma(t,x,  \mu')\| &\leq K \mc{W}_2(\mu, \mu'),\\
             \|\sigma^0(t, x, \mu) - \sigma^0(t, x, \mu')\| &\leq K \mc{W}_2(\mu, \mu'),
    \end{align*}
    for all $t\in[0,T], x\in\RR^d,\alpha\in\RR^m$, $\mu, \mu^\prime\in\mcP^2(\RR^d)$, where $\mc{W}_2$ is the 2-Wasserstein distance. 

\item[\bf B2.] (Separable in $\alpha, \mu$)
    $f$ is of the form
    \begin{equation*}
        f(t,x,\mu, \alpha) = f^0(t,x,\alpha)+f^1(t,x,\mu),
    \end{equation*}
    where $f^0$ is assumed to be convex in $(x,\alpha)$ and strictly convex in $\alpha$, and $f^1$ is assumed to be convex in $x$.

\item[\bf B3.] (Weak monotonicity)
For all $t\in[0,T]$, $\mu, \mu^\prime\in \mcP^2(\RR^d)$ and $\gamma \in \mcP^2(\RR^d\times\RR^d)$ with marginals $\mu, \mu^\prime$ respectively,
    \begin{align*}
    &\quad\quad \int_{\RR^d\times \RR^d} \big[(\pd_x g(x, \mu) - \pd_x g(y, \mu'))^T(x-y)\big]\gamma(\ud x, \ud y) \ge 0,\nonumber \\
    & \int_{\RR^d\times \RR^d} \big[(\pd_x f(t,x, \mu,\alpha) - \pd_x g(t, y, \mu', \alpha))^T(x-y)\big]\gamma(\ud x, \ud y) \ge 0.
    \end{align*}
    \end{itemize}
\end{assump}

Note that Assumption \ref{assump:cvg} extends conditions \textbf{A} and \textbf{B} in \citet{ahuja2015mean} by considering general drift coefficient $b(t, x, \mu, \alpha)$ and non-constant diffusion coefficients $\sigma(t, x, \mu)$ and $\sigma^0(t, x, \mu)$.

Our proof of Theorem~\ref{thm:cvg} uses the probabilistic approach. To this end, we define the Hamiltonian by
\begin{equation*}
    H(t, x, y, \mu, \alpha) = b(t, x, \mu, \alpha) \cdot y + f(t, x, \mu, \alpha). 
\end{equation*}
Denote by $\hat \alpha$ the minimizer of the Hamiltonian which is unique due to Assumptions {\bf A1} and {\bf A3}:
\begin{equation}\label{def:alphahat}
    \hat\alpha(t, x, y, \mu) = \argmin_{\alpha\in \RR^m} H(t, x, y, \mu, \alpha).
\end{equation}
By the Lipschitz property of $\partial_\alpha f$ in $(t, \mu, \alpha)$ and the boundedness of $b_2(t)$, $\hat \alpha$ is Lipschitz in $(x, y, \mu)$. Let $\hat H$ be the Hamiltonian, with $\hat\alpha$ obtained in \eqref{def:alphahat},
\begin{equation}
    \hat H(t, x, y, \mu) = H(t, x, y, \mu, \hat\alpha(t, x, y, \mu)).
\end{equation}
Under Assumptions {\bf A1-A3}, with the stochastic maximum principle, the problem \eqref{def:J}-\eqref{def:Xt} is equivalent to solve the following FBSDE,  given $\mu \in \mc{M}([0,T]; \mcP^2(\RR^d))$,
\begin{equation}\label{def:FBSDE}
\begin{aligned}
    \ud X_t &= b(t, X_t, \mu_t, \hat\alpha(t, X_t, Y_t, \mu_t))\ud t + \sigma(t, X_t, \mu_t) \ud W_t + \sigma^0(t, X_t, \mu_t) \ud B_t, \quad X_0 = x_0 \sim \mu_0,\\
    \ud Y_t &= - \partial_x \hat H(t, X_t, Y_t, \mu_t) \ud t + Z_t \ud W_t + Z_t^0 \ud B_t, \quad Y_T = \partial_x g(X_T, \mu_T).
\end{aligned}
\end{equation}
Moreover, the optimal control is given by 
\begin{equation}
    \hat \alpha_t = \hat \alpha(t, X_t, Y_t, \mu_t),
\end{equation}
 for any solution $(X_t, Y_t, Z_t, Z_t^0)$ to FBSDE \eqref{def:FBSDE}.

The next theorem describes the McKean-Vlasov FBSDE for finding the mean-field equilibrium ({\it cf.} Definition~\ref{defn:MFG}).
\begin{thm}[Theorem 2.2.8, \citet{ahuja2015mean}]
Under Assumptions {\bf A1-A3}, the mean-field equilibrium of \eqref{def:J}-\eqref{def:Xt} exists if and only if the following McKean-Vlasov FBSDE is solvable:
\begin{equation}\label{def:MK-FBSDE}
\begin{aligned}
    \ud X_t &= b(t, X_t, \mc{L}(X_t \vert \mcF_t^B), \hat\alpha(t, X_t, Y_t, \mu_t))\ud t + \sigma(t, X_t,\mc{L}(X_t \vert \mcF_t^B)) \ud W_t + \sigma^0(t, X_t,\mc{L}(X_t \vert \mcF_t^B)) \ud B_t,\\
    \ud Y_t &= - \partial_x \hat H(t, X_t, Y_t, \mc{L}(X_t \vert \mcF_t^B)) \ud t + Z_t \ud W_t + Z_t^0 \ud B_t. 
\end{aligned}
\end{equation}
Moreover, the mean-field control-distribution flow pair is given by
\begin{equation}\label{def:MK-FBSDEequi}
    \alpha_t^\ast = \hat\alpha(t, X_t, Y_t, \mc{L}(X_t \vert \mcF_t^B)),  \quad \mu_t^\ast = \mc{L}(X_t \vert \mcF_t^B), \quad \forall t \in [0,T]. 
\end{equation}
\end{thm}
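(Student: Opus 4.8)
The plan is to read the stated equivalence as the combination of two ingredients that are already in place: the stochastic maximum principle (SMP), recorded just above the theorem as the equivalence between the control problem \eqref{def:J}--\eqref{def:Xt} with a \emph{frozen} flow $\mu$ and the FBSDE \eqref{def:FBSDE}, and the fixed-point consistency condition \eqref{def:MFG} of Definition~\ref{defn:MFG}. The guiding observation is that a mean-field equilibrium is precisely an optimal control for some frozen flow $\mu$ together with the requirement that this $\mu$ coincide with the conditional law of the resulting optimal state. Thus the whole argument reduces to showing that imposing $\mu_t = \mcL(X_t \vert \mcF_t^B)$ on the coefficients of \eqref{def:FBSDE} produces exactly the McKean--Vlasov system \eqref{def:MK-FBSDE}.

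For the forward implication, suppose $(\alpha^\ast, \mu^\ast)$ is a mean-field equilibrium. Since $\alpha^\ast$ minimizes \eqref{def:J} for the given flow $\mu^\ast$, the SMP equivalence supplies a solution $(X_t, Y_t, Z_t, Z_t^0)$ of \eqref{def:FBSDE} with $\mu = \mu^\ast$ and $\alpha_t^\ast = \hat\alpha(t, X_t, Y_t, \mu_t^\ast)$. The equilibrium condition $\mu_t^\ast = \mcL(X_t^{\alpha^\ast} \vert \mcF_t^B) = \mcL(X_t \vert \mcF_t^B)$ then lets me replace $\mu_t^\ast$ by the conditional law of $X_t$ throughout the coefficients of \eqref{def:FBSDE}, turning it into \eqref{def:MK-FBSDE}; hence the McKean--Vlasov FBSDE is solvable and the representation \eqref{def:MK-FBSDEequi} holds by construction.

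For the converse I would start from a solution $(X_t, Y_t, Z_t, Z_t^0)$ of \eqref{def:MK-FBSDE}, set $\mu_t^\ast := \mcL(X_t \vert \mcF_t^B)$ and $\alpha_t^\ast := \hat\alpha(t, X_t, Y_t, \mu_t^\ast)$. With $\mu^\ast$ now \emph{regarded as a fixed, given} flow, the system \eqref{def:MK-FBSDE} is literally \eqref{def:FBSDE}, so $(X, Y, Z, Z^0)$ solves the control-problem FBSDE for the frozen flow $\mu^\ast$. The sufficiency part of the SMP---valid because Assumption~{\bf A3} makes $g$ convex in $x$ and the reduced Hamiltonian $\hat H(t, x, y, \mu)$ convex in $x$ (joint convexity of $H$ in $(x,\alpha)$ followed by minimization over $\alpha$)---then certifies that $\alpha^\ast$ is optimal for \eqref{def:J}--\eqref{def:Xt} given $\mu^\ast$. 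Since $\mu_t^\ast = \mcL(X_t \vert \mcF_t^B)$ holds by definition, the consistency requirement \eqref{def:MFG} is automatic, so $(\alpha^\ast, \mu^\ast)$ is a mean-field equilibrium. Along the way one checks admissibility, namely $\alpha^\ast \in \mc{H}^2([0,T]; \RR^m)$ and $\mu^\ast \in \mc{M}([0,T]; \mcP^2(\RR^d))$, which follow from the Lipschitz dependence of $\hat\alpha$ on $(x, y, \mu)$ and standard $L^2$ estimates on the FBSDE solution under Assumptions~{\bf A1--A2}.

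The hard part will be the sufficiency direction of the maximum principle in the random-coefficient (common-noise) setting: one must verify the Hamiltonian convexity carefully and control the conditional-expectation terms when comparing the cost of $\alpha^\ast$ against an arbitrary admissible competitor, invoking the convexity inequality of Assumption~{\bf A3} together with the terminal condition $Y_T = \partial_x g(X_T, \mu_T^\ast)$ in an integration-by-parts (duality) argument on $\langle Y_t, X_t - X_t^{\alpha'}\rangle$. The necessity direction is comparatively routine once the SMP equivalence is granted, so the bulk of the rigor concentrates on showing that the frozen-flow FBSDE \eqref{def:FBSDE} certifies genuine optimality rather than mere stationarity.
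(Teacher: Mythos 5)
Your proposal is correct and takes essentially the same route as the paper's source: the paper does not prove this statement itself but imports it as Theorem~2.2.8 of \citet{ahuja2015mean}, whose proof is precisely your composition of the frozen-flow stochastic maximum principle (necessity for the forward direction, convexity-based sufficiency for the converse) with the fixed-point consistency condition $\mu_t^\ast = \mcL(X_t \vert \mcF_t^B)$ substituted into the coefficients of \eqref{def:FBSDE}. Nothing further is needed.
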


\begin{thm}
Under Assumption~\ref{assump:cvg}, the FBSDE systems \eqref{def:FBSDE} and \eqref{def:MK-FBSDE} have unique solutions. Moreover, let $\mu_t^1, \mu_t^2 \in \mc{M}([0,T]; \mcP^2(\RR^d))$ be different given flow of measures, and denote by $(X_t^i, Y_t^i, Z_t^i, Z_t^{0,i})$ the unique solution to FBSDE \eqref{def:FBSDE} given $\mu_t^i$, then 
\begin{equation}\label{eq:estFBSDE}
    \EE\left[\sup_{t \in [0,T]} \| \Delta X_t\|^2 + \sup_{t \in [0,T]} \|\Delta Y_t\|^2 + \int_0^T \|\Delta Z_t\|^2 + \|\Delta Z_t^0\|^2 \ud t \right] \leq C_{K,T} \EE\left[\int_0^T (\Delta \mu_t)^2 \ud t \right],
\end{equation}
where $\Delta X_t = X_t^1 - X_t^2$, $\Delta Y_t, \Delta Z_t, \Delta Z_t^0$ are defined similarly, and $\Delta \mu_t = \mc{W}_2(\mu_t^1, \mu_t^2)$.
\end{thm}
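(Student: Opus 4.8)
The plan is to prove the stability estimate \eqref{eq:estFBSDE} first, as well-posedness of both systems will follow from it by standard arguments. Existence and uniqueness of \eqref{def:FBSDE} for a \emph{fixed} flow $\mu$ is not the difficulty: by Assumptions \textbf{A1}--\textbf{A3} the control problem \eqref{def:J}--\eqref{def:Xt} is convex and strictly convex in $\alpha$, so it admits a unique minimizer, which the stochastic maximum principle characterizes through \eqref{def:FBSDE}; solvability of the FBSDE is then equivalent to solvability of this convex problem (alternatively one invokes the Peng--Wu monotonicity method). The core is \eqref{eq:estFBSDE}, from which the McKean--Vlasov system \eqref{def:MK-FBSDE} is handled at the end.

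For \eqref{eq:estFBSDE} I would fix $\mu^1,\mu^2$, denote by $(X^i,Y^i,Z^i,Z^{0,i})$ the corresponding solutions, and write the difference system for $(\Delta X,\Delta Y,\Delta Z,\Delta Z^0)$. The central device is It\^o's formula applied to the duality product $\langle \Delta X_t,\Delta Y_t\rangle$. Taking expectations kills the martingale parts, and since both solutions share $X_0\sim\mu_0$ we have $\Delta X_0=0$, so
\begin{equation*}
\EE\langle \Delta X_T, \Delta Y_T\rangle = \EE\int_0^T \big[\langle \Delta Y_t, \Delta b_t\rangle - \langle \Delta X_t, \Delta(\partial_x\hat H)_t\rangle + \langle \Delta\sigma_t,\Delta Z_t\rangle + \langle\Delta\sigma^0_t,\Delta Z^0_t\rangle\big]\ud t.
\end{equation*}
Using the envelope identity $\partial_\alpha H=0$ at $\hat\alpha$, the form $b=b_0(t,\mu)+b_1(t)x+b_2(t)\alpha$, and splitting every coefficient difference into a part frozen at a common measure (monotone in $(x,y,\alpha)$ by \textbf{A3} and \textbf{B3}) plus a pure measure-perturbation part (controlled by $\mc{W}_2(\mu^1_t,\mu^2_t)$ through \textbf{B1}), the terminal term $\EE\langle\Delta X_T,\partial_x g(X^1_T,\mu^1_T)-\partial_x g(X^2_T,\mu^2_T)\rangle$ is bounded below by $-C\,\EE[|\Delta X_T|\,\mc{W}_2(\mu^1_T,\mu^2_T)]$, its $x$-monotone piece being nonnegative. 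Collecting signs, the strict convexity constant $c_f$ from \textbf{A3} produces a coercive term, and after Young's inequality absorbs the measure contributions one arrives at
\begin{equation*}
c_f\,\EE\int_0^T |\Delta\hat\alpha_t|^2\ud t \le C\,\EE\int_0^T (\Delta\mu_t)^2\ud t.
\end{equation*}

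With the control difference pinned down, I would bootstrap to the full norm. Since $\hat\alpha$ is Lipschitz in $(x,y,\mu)$, a forward SDE estimate with Gronwall's inequality on \eqref{def:FBSDE} gives $\EE[\sup_t|\Delta X_t|^2]\le C\,\EE\int_0^T(|\Delta\hat\alpha_t|^2+(\Delta\mu_t)^2)\ud t$, and the standard $L^2$ estimate for the backward equation controls $\EE[\sup_t|\Delta Y_t|^2+\int_0^T(|\Delta Z_t|^2+|\Delta Z^0_t|^2)\ud t]$ by the terminal and driver differences, both already dominated by $\EE\int_0^T(\Delta\mu_t)^2\ud t$; combining these yields \eqref{eq:estFBSDE}. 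Finally, for \eqref{def:MK-FBSDE} I would define $\Phi:\mu\mapsto\{\mcL(X_t^\mu\mid\mcF_t^B)\}_t$ with $X^\mu$ solving \eqref{def:FBSDE}; a fixed point of $\Phi$ solves the McKean--Vlasov system, uniqueness follows by repeating the duality computation with $\mu^i=\mcL(X^i\mid\mcF^B)$ and invoking the monotonicity \textbf{B3}, and existence follows from \eqref{eq:estFBSDE} by a contraction on short intervals patched together (or directly from Ahuja's Theorem~2.2.8).

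The step I expect to be the main obstacle is the sign bookkeeping in the It\^o/duality computation: one must simultaneously exploit the joint convexity of $f$ in $(x,\alpha)$ with strict convexity in $\alpha$ (\textbf{A3}), the monotonicity of $g$ and $f$ in the state (\textbf{B3}), and the Lipschitz dependence on $\mu$ (\textbf{B1}), while ensuring that the general drift coefficient $b$ and the non-constant diffusions $\sigma,\sigma^0$ --- the terms \emph{beyond} the setting of \citet{ahuja2015mean} --- do not destroy the monotone structure. Keeping the $c_f$-coercivity strictly positive after absorbing the $b_1,\sigma,\sigma^0$ cross-terms is the delicate point, and is precisely where the boundedness and $x$-Lipschitz hypotheses in \textbf{A1} are needed.
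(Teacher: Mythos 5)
Your proposal and the paper's proof follow the same mathematical route, with the difference that the paper never actually carries the route out: its entire proof consists of checking that Assumption~\ref{assump:cvg} implies Assumption~\textbf{H} of \citet{ahuja2015mean} and then citing Theorem~3.1.3, Proposition~3.1.4, and Theorem~3.1.6 there, with the remark that the details ``essentially parallel'' Ahuja's derivations. What you sketch --- It\^o duality on $\langle \Delta X_t, \Delta Y_t\rangle$, cancellation of the $b_1$ terms, the identity $b_2^\top \Delta Y_t = -\Delta \partial_\alpha f$ from $\partial_\alpha H = 0$, coercivity from $c_f$, the frozen-measure/measure-perturbation splitting under \textbf{A3}/\textbf{B3}/\textbf{B1}, and a Gronwall--BSDE bootstrap plus a fixed point for \eqref{def:MK-FBSDE} --- is precisely the machinery inside the proofs the paper cites, so your reconstruction is faithful in substance and self-contained where the paper defers.

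That said, the point you flag as ``delicate'' is, under the paper's stated hypotheses, a genuine gap rather than bookkeeping, and it is worth naming exactly. Since $\sigma, \sigma^0$ depend on $x$ and $\mu$, the covariation in your duality identity contributes $\EE\int_0^T [\operatorname{tr}(\Delta\sigma_t^\top \Delta Z_t) + \operatorname{tr}((\Delta\sigma^0_t)^\top \Delta Z^0_t)]\ud t$ with $\|\Delta \sigma_t\| \le K(\|\Delta X_t\| + \Delta\mu_t)$. Nothing in the computation produces a coercive $-\|\Delta Z_t\|^2$ term to absorb this: after Young's inequality, the $\|\Delta Z\|^2$ piece must be routed through the backward $L^2$ estimate and the $\|\Delta X\|^2$ piece through the forward Gronwall estimate, and both return constants growing (exponentially) in $T$ multiplying $\EE\int_0^T\|\Delta\hat\alpha_t\|^2 \ud t$; the $c_f$-coercivity therefore survives only for small $T$, no matter how the Lipschitz constants in \textbf{A1} are used. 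In \citet{ahuja2015mean} this issue never arises because there $\sigma, \sigma^0$ are constant, so $\Delta\sigma \equiv 0$ and the one-shot duality pass closes for every $T$; for $x$-dependent diffusions one needs the continuation method under \textbf{B3} (or a smallness restriction), not a single duality computation. Two related imprecisions: (i) if $\sigma$ genuinely depends on $x$, the adjoint driver in \eqref{def:FBSDE} should contain $\partial_x\sigma^\top Z$ terms which the system as written omits, so the setting in which the whole argument is clean is $\sigma = \sigma(t,\mu)$, $\sigma^0 = \sigma^0(t,\mu)$; (ii) the terminal duality term produces $\EE[\mc{W}_2^2(\mu^1_T,\mu^2_T)]$, which is not dominated by $\EE\int_0^T(\Delta\mu_t)^2\ud t$ for a general continuous flow, so the right-hand side of \eqref{eq:estFBSDE} should really carry $\sup_{t}\EE[\mc{W}_2^2(\mu^1_t,\mu^2_t)]$ (which is how the estimate is in fact used in the proof of Theorem~\ref{thm:cvg}). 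Since the downstream application only requires $q = 2C_{K,T}T < 1$, i.e.\ small $T$, your argument suffices for the paper's purposes --- but as a proof of the theorem as stated, for an arbitrary horizon and $x$-dependent diffusions, it is incomplete at exactly the point you identified, a gap that the paper's citation-style proof shares rather than resolves.
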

\begin{proof}
The results generalize Theorem 3.1.3, Proposition 3.1.4 and Theorem 3.1.6 in \citet{ahuja2015mean} to the multi-dimensional case and  with Lipschitz SDE coefficients $b, \sigma, \sigma^0$. The original proofs rely on Theorem 3.1.1 and Theorem 3.1.2 under Assumption~{\bf H} in \citet{ahuja2015mean}. With the additional conditions on $(b, \sigma, \sigma^0)$ in our setting, Assumption~{\bf H} of \citet{ahuja2015mean} still holds. We omit the details because they essentially parallel the corresponding derivations in \citet{ahuja2015mean}.
\end{proof}

Now we are ready to prove Theorem~\ref{thm:cvg}.

\begin{proof}[Proof of Theorem~\ref{thm:cvg}]
The proof uses the estimate \eqref{eq:estFBSDE} repeatedly. We first observe that, for $\mu_t = \mc{L}(X_t \vert \mcF_t^B)$ and $\mu_t' = \mc{L}(X_t' \vert \mcF_t^B)$, one has
\begin{equation}\label{eq:estW2}
\EE[\mc{W}_2^2(\mu_t, \mu_t')] \leq \EE[\|X_t - X_t'\|^2], \quad \forall t \in [0,T].
\end{equation}
Then we define a map $\Phi$ by
\begin{equation}
    \mu = \{\mu_t\}_{0 \leq t \leq T} \to \Phi(\mu) := \{\mc{L}(X^\mu_t \vert \mcF_t^B)\}_{0 \leq t \leq T},
\end{equation}
where $X_t^\mu$ is the optimal controlled process in FBSDE \eqref{def:FBSDE} given $\mu \in \mc{M}([0,T]; \mcP^2(\RR^d))$. Combining \eqref{eq:estW2} and \eqref{eq:estFBSDE} gives
\begin{multline}\label{eq:mapest}
      \sup_{t \in [0,T]} \EE[\mc{W}_2^2(\Phi(\mu_t), \Phi(\mu_t'))] \leq \sup_{t \in [0,T]} \EE[\|X_t^\mu - X_t^{\mu'}\|^2] \\ \leq C_{K,T} \EE\left[\int_0^T \mc{W}_2^2(\mu_t, \mu_t') \ud t\right] \leq C_{K, T} T \sup_{t \in [0,T]} \EE[\mc{W}_2^2(\mu_t, \mu_t')].
\end{multline}
Thus, for sufficiently small $T$, $\Phi$ is a contraction map. By definition, $\mu_t^\ast$ defined in \eqref{def:MK-FBSDEequi} is a fixed point of $\Phi$: $\Phi(\mu^\ast) = \mu^\ast$. Let $\mu^{(0)}$ be the initial guess of $\mu^\ast$, and $\mu^{(n)}$ be the resulted flow of measures of $X_t$ given $\tilde \mu^{(n-1)}$ which is the approximation of $\mu^{(n-1)}$ by truncated signatures. So the measure flows are generated by
\begin{equation}
    \mu^{(0)} \to  \mu^{(1)}  \leadsto  \tilde \mu^{(1)} \to   \mu^{(2)}  \leadsto  \tilde \mu^{(2)} \cdots \to  \mu^{(n-1)}  \leadsto  \tilde \mu^{(n-1)} \to   \mu^{(n)}  \leadsto  \tilde \mu^{(n)}
\end{equation}
where $\to$ corresponds to the map $\Phi$, and $\leadsto$ corresponds to the truncated signature approximation. Therefore, with \eqref{eq:mapest} and the assumption $\sup_{t \in [0,T]} \EE[\mc{W}_2^2(\tilde{\mu}^{(n)}_t, \mu^{(n)}_t)] \leq \eps$ in Theorem~\ref{thm:cvg}, and denoting by $2C_{K,T}T = q$, we deduce that
\begin{align*}
      \sup_{t \in [0,T]} \EE[\mc{W}_2^2(\tilde \mu_t^{(n)}, \mu_t^\ast)] &\leq   2\sup_{t \in [0,T]} \EE[\mc{W}_2^2(\tilde \mu_t^{(n)}, \mu_t^{(n)})] +  2\sup_{t \in [0,T]} \EE[\mc{W}_2^2( \mu_t^{(n)}, \mu_t^\ast)] \\
      & \leq 2\eps + 2C_{K,T} T  \sup_{t \in [0,T]}\EE[\mc{W}_2^2( \tilde\mu_t^{(n-1)}, \mu_t^\ast)]  = 2\eps + q \sup_{t \in [0,T]}\EE[\mc{W}_2^2( \tilde\mu_t^{(n-1)}, \mu_t^\ast)] \\
      & \leq 2\eps + q (2\eps + q  \sup_{t \in [0,T]}\EE[\mc{W}_2^2( \tilde\mu_t^{(n-2)}, \mu_t^\ast)]) \\
      & \leq \cdots \\
      & \leq 2\eps (1 + q + q^2 + \ldots q^{n-1}) + q^n \sup_{t \in [0,T]}\EE[\mc{W}_2^2( \mu_t^{(0)}, \mu_t^\ast)] \\
      & = \frac{2-2q^n}{1-q}\eps + q^n\sup_{t \in [0,T]}\EE[\mc{W}_2^2( \mu_t^{(0)}, \mu_t^\ast)]. 
\end{align*}
With sufficiently small $T$, one has $0 < q < 1$. To estimate  $\int_0^T \EE|\alpha_t^{(n)} - \alpha_t^\ast|^2 \ud t$, we observe that
\begin{equation}
\alpha^{(n)}_t - \alpha^\ast_t = \hat \alpha(t, X_t^{\tilde \mu^{(n-1)}}, Y_t^{\tilde \mu^{(n-1)}}, \tilde \mu_t^{(n-1)}) - \hat \alpha(t, X_t^\ast, Y_t^\ast, \mu_t^\ast),
\end{equation}
where $ (X_t^{\tilde \mu^{(n-1)}}, Y_t^{\tilde \mu^{(n-1)}})$ is the solution to FBSDE \eqref{def:FBSDE} given $ \tilde \mu^{(n-1)}$, and $(X_t^\ast, Y_t^\ast)$ can be viewed as the solution to FBSDE \eqref{def:FBSDE} given $\mu^\ast$. Then using  the Lipschitz property of $\hat \alpha$ in $(t, x, \mu)$ and \eqref{eq:estFBSDE} again produces
\begin{align*}
  \int_0^T \EE|\alpha_t^{(n)} - \alpha_t^\ast|^2 \ud t 
  &\leq C_{K,T}\EE\left[\int_0^T \|X_t^{\tilde \mu^{(n-1)}} - X_t^\ast\|^2 + \|Y_t^{\tilde \mu^{(n-1)}} - Y_t^\ast\|^2 + \mc{W}_2^2( \tilde \mu_t^{(n-1)}, \mu_t^\ast )  \ud t \right]  \\
  & \leq C_{K,T} T \sup_{t \in [0,T]} \EE[\mc{W}_2^2( \tilde \mu_t^{(n-1)}, \mu_t^\ast )].
\end{align*}
Therefore, we obtain the desired result.
\end{proof}

\vspace{-0.6em}
Next we give the proof to Theorem~\ref{thm:numcvg}. 
\vspace{0em}
\begin{proof}[Proof of Theorem~\ref{thm:numcvg}]
Consider a partition of $[0,T]: 0=t_0<\cdots<t_L=T$, and define $\pi(t) = t_k$ for $t \in [t_k, t_{k+1})$ with $\|\pi\| = \max_{1 \leq k < L}|t_{k} - t_{k-1}|$, then by following the line of the proof to Theorem~\ref{thm:cvg}, one only needs an additional estimate on $\EE|X_t^\mu - X_{t_k}^{(n)}|^2$ to complete the proof. Noticing that $X_t$ solves \eqref{def:Xt} with $\mu^\ast$ and $X_{t_k}^{(n)}$ satisfies \eqref{def:Xt_discrete} with $\tilde \mu^{(n-1)}$, one can obtain the estimate by following Lemma 14 in \citet{carmona2019convergence} with $N = 1$. 
\end{proof}

\section{Benchmark Solutions}\label{app:Benchmark}
This appendix summarizes the analytical solutions to the three examples in Section~\ref{sec:numerics}, which are used to benchmark our algorithm's performance.

\paragraph{Linear-Quadratic MFGs.}
The analytical solution is provided in \citet{carmona2015mean}:
\begin{align}
    & m_t := \EE[X_t \vert \mcF_t^B]= \EE[X_0] + \rho\sigma B_t, \quad t\in[0,T], \label{def:LQ_m} \\
    & \alpha_t = (q+\eta_t) (m_t -X_t), 
    \quad t\in[0,T], \label{def:LQ_alpha}
\end{align}
where $\eta_t$ is a deterministic function solving the Riccati equation:
\begin{equation*}
    \dot \eta_t = 2(a+q)\eta_t + \eta_t^2 - (\eps - q^2), \quad \eta_T = c,
\end{equation*}
with the solution given by
\begin{equation*}
    \eta_t = \frac{-(\epsilon-q^2)(e^{(\delta^+-\delta^-)(T-t)}-1) -c(\delta^+e^{(\delta^+-\delta^-)(T-t)}-\delta^-)}{(\delta^-e^{(\delta^+-\delta^-)(T-t)} - \delta^+) -c(e^{(\delta^+-\delta^-)(T-t)} -1)}.
\end{equation*}
Here $\delta^\pm = -(a+q)\pm \sqrt{R}$, $R = (a+q)^2 + (\epsilon-q^2)>0$, and the minimized expected cost is $V(0, x_0 - \EE[x_0])$ with 
\begin{equation*}
    V(t,x) = \frac{\eta_t}{2}x^2 + \mu_t, \quad \mu_t = \half \sigma^2(1-\rho^2)\int_t^T \eta_s \ud s.
\end{equation*}

The benchmark trajectories in Figure~\ref{fig:LQ} are simulated according to \eqref{def:LQ_SDE}  with $m_t$ and $\alpha_t$ in \eqref{def:LQ_m} and \eqref{def:LQ_alpha}.

\paragraph{Mean-field Portfolio Game} Given the type vector $\zeta=(\xi, \delta, \theta, \mu, \nu, \sigma)$, the analytical solution provided in  \citet{lacker2018mean} is summarized below
\begin{align*}
    & \pi^*_t = \delta\frac{\mu}{\sigma^2+\nu^2} + \theta\frac{\sigma}{\sigma^2+\nu^2}\frac{\phi}{1-\psi}, \\
    & m_t = \EE[\xi] +\EE[\mu \pi^*]t + \EE[\sigma \pi^*]B_t,
\end{align*}
where $\phi=\EE[\delta\frac{\mu\sigma}{\sigma^2+\nu^2}]$ and $\psi = \EE[\theta\frac{\sigma^2}{\sigma^2+\nu^2}]$.
Note that, since the type vector $\zeta$ is random representing the heterogenuity of agents in this mean-field game, $\pi^*$ is a random strategy. The maximized expected utility of this game is given by $\EE[v(0, \xi - \theta \EE[\xi])]$, with
\begin{equation*}
    v(t,x) = -e^{-x/\delta}e^{-\rho(T-t)}, \quad \rho = \frac{1}{2(\sigma^2 + \nu^2)} \left(\mu + \frac{\theta}{\delta}\frac{\phi}{1-\psi}\sigma\right)^2  - \frac{\theta}{\delta}\left(\tilde\psi + \frac{\tilde \phi \phi}{1-\psi}\right) - \half\left(\frac{\theta}{\delta}\frac{\phi}{1-\psi}\right)^2,
\end{equation*}
\begin{equation*}
     \quad \tilde\psi = \EE\left[\delta \frac{\mu^2}{\sigma^2 + \nu^2}\right], \quad \tilde\phi = \EE\left[\theta \frac{\mu\sigma}{\sigma^2 + \nu^2}\right].
\end{equation*}
Note that Figure~\ref{fig:Invest}(c) plots the absolute value of $\EE[v(0, \xi - \theta \EE[\xi])]$.

\paragraph{Mean-field Game of Optimal Consumption and Investment}

Following \citet{lacker2020many}, the analytical solution is given by
\begin{align}
    &\pi^*_t \equiv \pi^\ast = \frac{\delta\mu}{\sigma^2+\nu^2} - \frac{\theta(\delta-1)\sigma}{\sigma^2+\nu^2}\frac{\phi}{1+\psi}, \quad c^*_t = \left(\frac{1}{\beta} + (\frac{1}{\lambda}-\frac{1}{\beta})e^{-\beta(T-t)} \right)^{-1}, \label{def: InvestConsump c}
\end{align}
where 
\begin{align*}
    &\phi = \EE\left[\frac{\delta\mu\sigma}{\sigma^2+\nu^2}\right], \quad 
    \psi = \EE\left[\frac{\theta(\delta-1)\sigma^2}{\sigma^2+\nu^2}\right],\quad
    \lambda = \epsilon^{-\delta}\,\left(e^{ \EE\left[\log(\epsilon^{-\delta})\right]}\right)^{-\frac{\theta(\delta-1)}{1+\EE\left[\theta(\delta-1)\right]}},\\
    &\beta = \theta(\delta-1)\frac{\EE\left[\delta\rho\right]}{1+\EE\left[\theta(\delta-1)\right]}-\delta\rho,
\end{align*}
and 
\begin{align*}
    \rho = \left(1-\ovd\right)\,&\left\{\frac{\delta}{2(\sigma^2+\nu^2)}\left(\mu-\sigma\frac{\phi}{1+\psi}\theta(1-\ovd)\right)^2
    +\frac{1}{2}\left(\frac{\phi}{1+\psi}\right)^2\theta^2\left(1-\ovd\right)\right.\nonumber\\
    &-\theta\EE\left[\frac{\delta\mu^2-\theta(\delta-1)\sigma\mu\frac{\phi}{1+\psi}}{\sigma^2+\nu^2}\right]
    +\left. \frac{\theta}{2}\EE\left[\frac{(\delta\mu-\theta(\delta-1)\sigma\frac{\phi}{1+\psi})^2}{\sigma^2+\nu^2}\right]\right\}.
\end{align*}
    
Note that the expression of $m_t$, $\Gamma_t$ and the maximized expected utility are not given in \citet{lacker2020many}. For completeness,  we give their derivations below. Since $c^*_t$ in \eqref{def: InvestConsump c} doesn't depend on the common noise $B$, $\Gamma_t := \exp \EE[\log c_t^\ast \vert \mcF_t^B]$ admits a unique formula for all agents
\begin{equation*}
    \Gamma_t = \exp \EE[\log c^*_t].
\end{equation*}
To obtain the formula for $m_t := \exp\EE[\log X_t^\ast \vert \mcF_t^B]$, we first deduce by It\^{o}'s formula that
\begin{equation}
    \ud \log X_t^\ast = \pi_t^\ast (\mu \ud t + \nu \ud W_t + \sigma \ud B_t) - \frac{1}{2}  (2c_t^* + (\pi_t^\ast)^2\sigma^2 + (\pi_t^\ast)^2\nu^2 )\ud t, \label{def: InvestConsumplogSDE}
\end{equation}
from which we easily get 
$$
\EE[\log X_t^\ast|\mcF^B_t] = \EE[\log \xi]+ \EE[\pi^*\mu - \frac{1}{2}(\pi^*)^2(\sigma^2+\nu^2)] t - \int_0^t \EE[c^*_s] \ud t + \pi^* \sigma B_t,
$$
and $m_t = \exp \EE[\log X_t^\ast|\mcF^B_t]$. The maximized expected utility of this game is given by $\EE[v(0, \xi, \EE[\xi])]$, with 
\begin{equation*}
v(t,x,y) = \eps\left(1-\ovd\right)^{-1} x^{1-\ovd}y^{-\theta(1-\ovd)}f(t),
\end{equation*}
and $f(t)$ is defined by
\begin{equation*}
    f(t) = \exp\left\{\int_t^T \left(\rho + \ovd c_s^* + \EE[c_s^*]\left(1-\ovd\right)\theta\right) \ud s \right\}.
\end{equation*}
Note that, to ensure the positiveness of $X_t$ required by using the power utility, the trajectories of $X_t$ are obtained by simulating $\log X_t$ via \eqref{def: InvestConsumplogSDE} then taking the exponential. 


\section{Plots of $\pi_t$, $c_t$, $\Gamma_t=\exp\EE(\log c_t |\mcF^B_t)$ for Mean-Field Game of Optimal Consumption and Investment}\label{app:OCI}

\begin{figure*}[ht]
    \centering
     \subfloat[$\pi_t$]{
         \includegraphics[width=0.32\columnwidth]{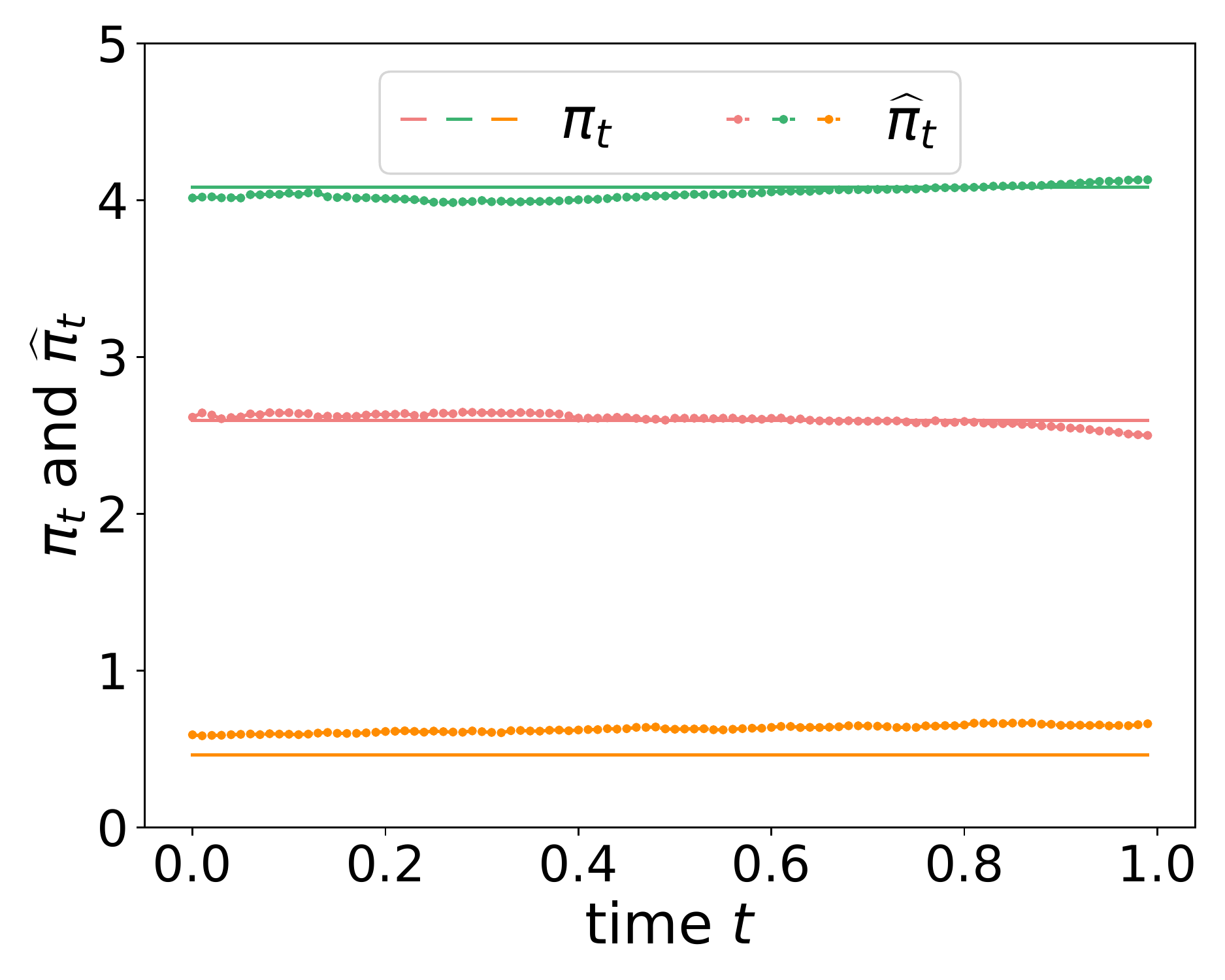}} 
    \subfloat[$c_t$]{
         \includegraphics[width=0.32\columnwidth]{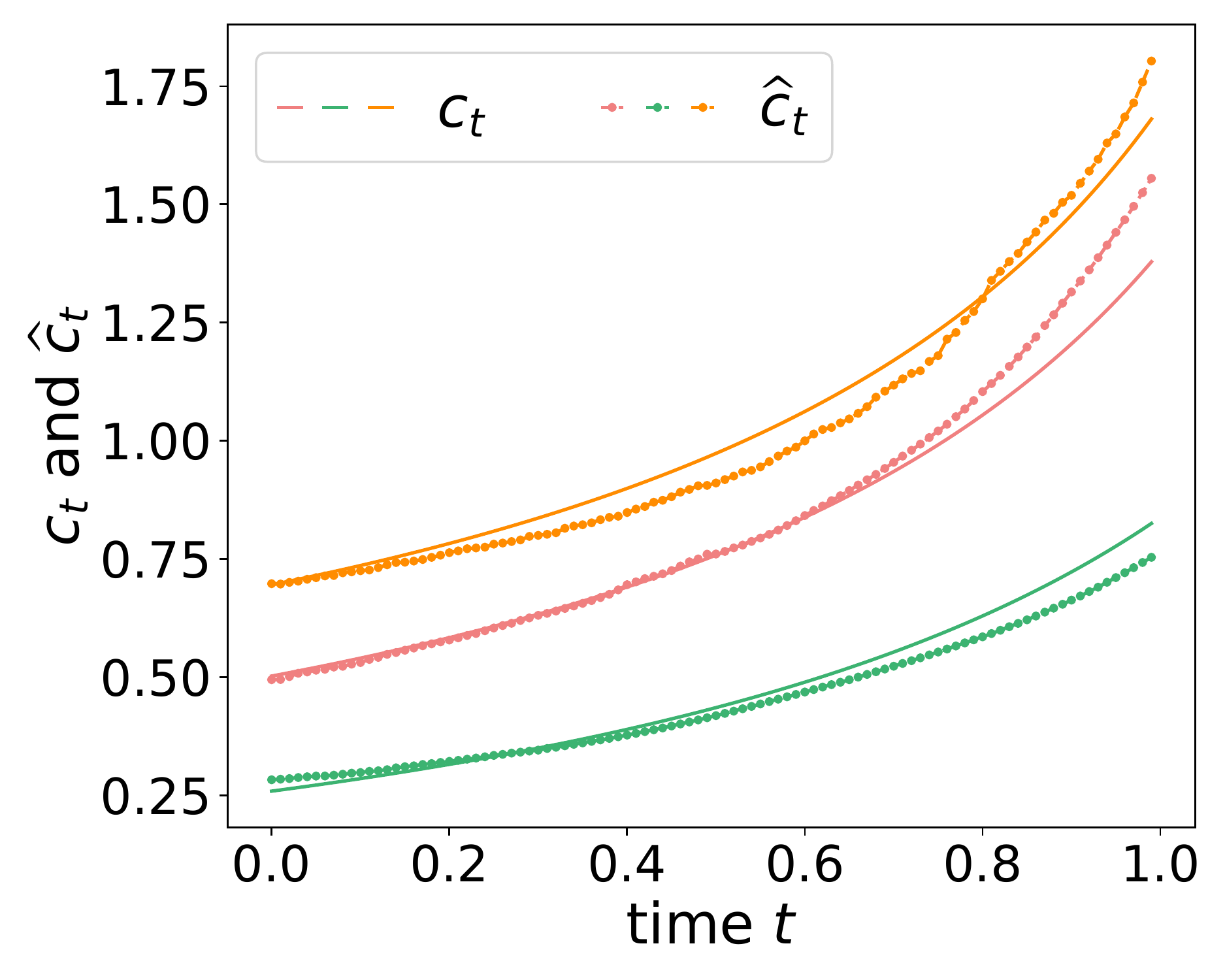}} 
     \subfloat[$\Gamma_t =  \exp\EE(\log c_t |\mcF^B_t)$]{
         \includegraphics[width=0.32\columnwidth]{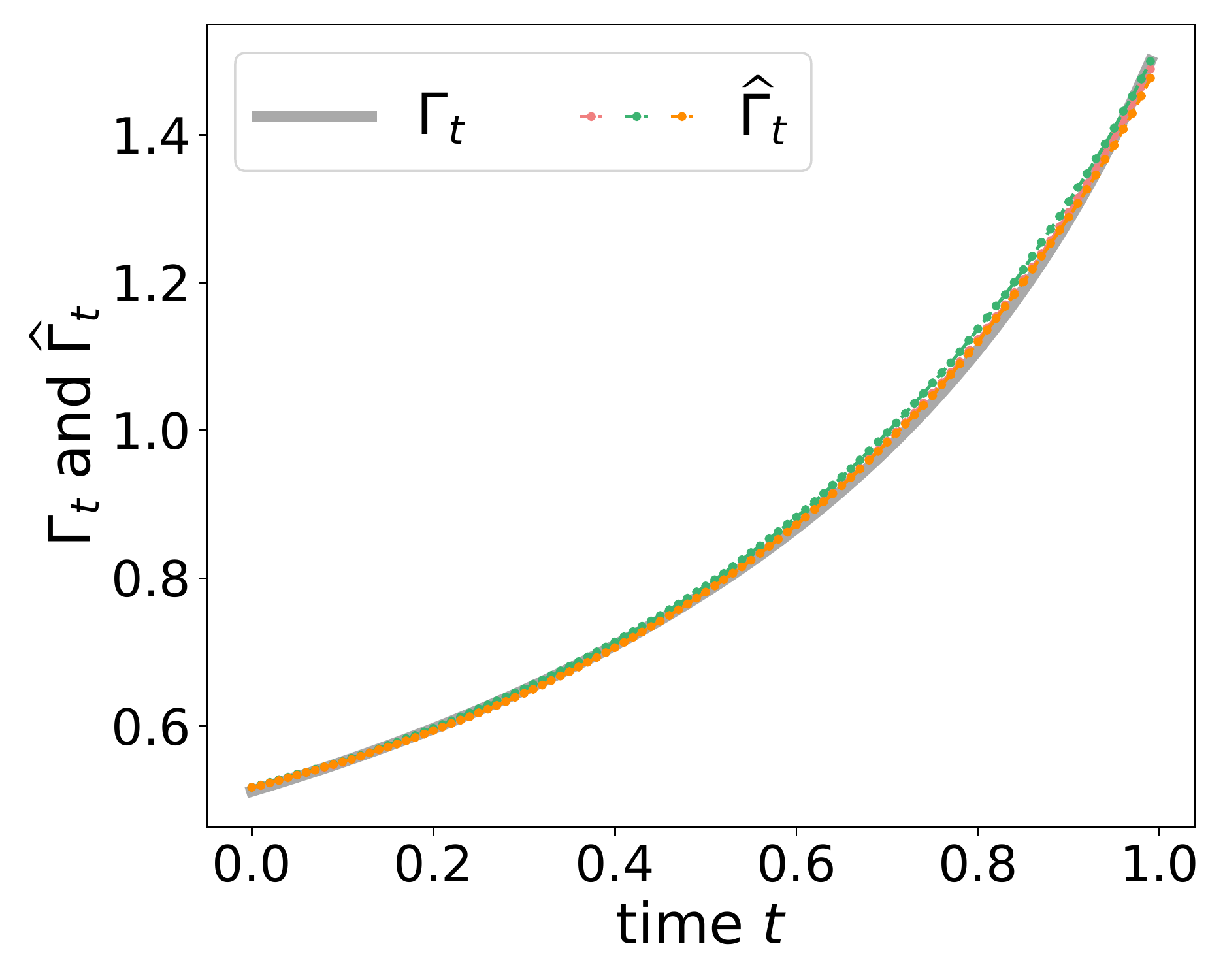}}
    \caption{Plots on test data for three different $(X_0^i, W^i, B^i, \zeta^i)$. Solid line is the benchmark solution and dashed line is the numerical approximation using the Sig-DFP algorithm. Each panel presents three trajectories of $\pi_t$, $c_t$, and $\Gamma_t=\exp\EE(\log c_t |\mcF^B_t)$ and their approximations. Parameter choices are: $\delta \sim U(2, 2.5), \mu\sim U(0.25, 0.35), \nu\sim U(0.2, 0.4), \theta, \xi \sim U(0,1), \sigma\sim U(0.2, 0.4)$, $\epsilon\sim U(0.5, 1)$.}\label{fig:OCIcurves}
\end{figure*}

\section{Experiment setup for the high-dimensional case $n_0=5$}\label{app:highd}

To test the performance of Sig-DFP in high dimensions, we implement a toy experiment on the mean-field game of optimal consumption and investment with the common noise of dimension $n_0=5$. Specifically, we modify the $\sigma \ud B_t$ term in \eqref{def: InvestConsumpSDE} to be in high dimensions, {\it i.e.}, $X_t$ now follows
\begin{equation*}
    \ud X_t = \pi_t X_t(\mu \ud t + \nu \ud W_t + {\bm\sigma}\transpose \ud {\bm B}_t) - c_tX_t \ud t,  
\end{equation*}
where ${\bm\sigma} := (\sigma_1, \dots, \sigma_5)\transpose$, ${\bm B}_t$ is a $5$-dimensional Brownian motion, and $X_0 = \xi$. We use the same hyperparameters for training and provide the running time in Table~\ref{tab:depth}. 

\end{onecolumn}


%



\end{document}